\numberwithin{equation}{section}
\newtheorem{theorem}{Theorem}[subsection]
\newtheorem{proposition}{Proposition}[subsection]
\newtheorem{remark}{Remark}[subsection]
\newtheorem{example}{Example}[subsection]
\newtheorem{definition}{Definition}[subsection]
\title{A Generalization of Bivariate Lack-of-Memory Properties }
\author[1]{Massimo Ricci}
\affil[1]{Department of Statistical Sciences, Via Belle Arti 41, Bologna, Italy}
\begin{document}
\maketitle
\abstract{
\noindent In this paper, we propose an extension of the standard bivariate lack-of-memory properties, replacing in the associated functional equations the standard product by the pseudo one, defined as $a\otimes_hb=h\left (h^{-1}(a)\cdot h^{-1}(b)\right ), \ \forall \ a,b \in [0,1],$ where $h$ is an increasing bijection of $[0,1]$, called generator.
We say that a survival function $\bar{F}$ satisfies pseudo lack-of-memory property in strong version if 
$\bar F(s_1+t_1,s_2+t_2)=\bar F_{X,Y}(s_1,s_2)\otimes_h\bar F_{X,Y}(t_1,t_2), \ \forall \ t_1,t_2,s_1,s_2 \geq 0$
and in weak version if
$\bar F(s_1+t,s_2+t)=\bar F_{X,Y}(s_1,s_2)\otimes_h\bar F_{X,Y}(t,t), \ \forall \ t,s_1,s_2 \geq 0.$
After finding sufficient and necessary conditions under which the solutions of the above functional equations are bivariate survival functions, we focus our attention on the distribution which satisfies the pseudo weak version. In particular, we provide a characterization, in line with the existing literature, of that distribution 
and  we investigate its dependence structure, analysing the impact that the generator $h$ has on its Kendall distribution function and on its tail dependence coefficients.
\newline
\textbf{Keywords} lack-of-memory property; pseudo-product ; functional equation; characterization; dependence structure
\vskip 0.3cm
\section{Introduction}
In Marshall and Olkin (1967), the following stochastic representation is specified for the random vector 
$(X,Y)$:
\begin{equation}
(X, Y) = (\min(T_1, T_3),\min(T_2, T_3)),
\label{intr1}
\end{equation}
where $T_1$, $T_2$ are independent non-negative continuous random variables representing the individual shocks while $T_3$, independent of $T_1$ and $T_2$, is the common shock affecting both variables $X$ and $Y$.
For example, in reliability theory, (\ref{intr1}) may represent the vector of lifetimes of two
components subjected to a common environmental shock, while, in the
theory of credit risk, it can be considered as the vector of times to default of two counter-parties which face both idiosyncratic and systemic risks. \\
In the case in which $T_i$ in the stochastic representation (\ref{intr1}) are independent and exponentially distributed random variables with parameters $\lambda_i$, $i=1,2,3$, the distribution of the vector $X,Y$ is the famous bivariate exponential Marshall-Olkin distribution with survival function 
\begin{equation}
\bar{F}_{X,Y}(x, y) = P(X > x, Y > y) = e^{-\lambda_1  x - \lambda_2  y -\lambda_3 \max\{x,y\}}
\label{intr2}
\end{equation}
with $x, y \geq 0$ and $\lambda_i \geq 0, i = 1, 2, 3$. \\
However, the model above shows many limitations, including the fact that marginal hazard rates are constant: for this reason, some authors assumed that the random variables $T_i$ above follow a second-type Pareto distribution or the Weibull distribution in order to model bivariate survival data ( see among the others Asimit et al., 2010, and Lu,1989). \\
On the other hand, many authors focus on the generalization of the lack-of-memory property introduced by Marshall and Olkin. For example, it is well known that the survival function (\ref{intr2}) is the unique bivariate survival function with exponential marginals satisfying the bivariate weak lack-of-memory property,
\begin{equation}
    \bar{F}_{X,Y}(s_1+t,s_2+t) = \bar{F}_{X,Y}(s_1,s_2) \cdot \bar{F}_{X,Y}(t,t)
    \label{intr4}
\end{equation}
for all $s_1, s_2 \geq 0$ and $t \geq 0$. 
The strong version of (\ref{intr4}) is
\begin{equation}
    \bar{F}_{X,Y}(s_1+t_1,s_2+t_2) = \bar{F}_{X,Y}(s_1,s_2) \cdot \bar{F}_{X,Y}(t_1,t_2), \ s_1,s_2,t_1,t_2 \geq 0
    \label{intr5}
\end{equation}
and, in Marshall and Olkin (1967), it has been showed that the unique solution of the latter is given by 
\begin{equation}
    \Bar{F}_{X,Y}(x,y) = e^{-\lambda_1 x - \lambda_2 y}, \ \lambda_1, \lambda_2 > 0.
    \label{strong standard solution}
\end{equation}
In Muliere and Scarsini (1987), the authors generalize the functional equation (\ref{intr4}) by substituting the standard sum $+$ by an associative and reducible binary operator $\ast$ such that
$$x \ast y = g(g^{-1}(x) + g^{-1}(y)), x, y \in \mathbf{R}_+ $$
where $g$ is a continuous and monotone function.
The distribution, that they call Marshall-Olkin type distribution, that satisfies the functional equation 
\begin{equation*}
    \bar{F}_{X,Y}(s_1 \ast t,s_2 \ast t) = \bar{F}_{X,Y}(s_1,s_2) \cdot \bar{F}_{X,Y}(t,t),
\end{equation*}
has proved to be rather flexible in practice, including several useful bivariate distributions. 
A generalization of bivariate Marshall-Olkin distribution, which includes the distribution obtained by Muliere and Scarsini as a particular case, has been performed also in Li and Pellerey (2011), in which $T_i$ in (\ref{intr1}) are assumed only  to be independent but with general marginal survival functions.\\
In the spirit of Muliere and Scarsini, in this paper, we replace the standard product in (\ref{intr4}) and (\ref{intr5}) by a binary operator which can be seen as particular T-norm, obtaining the following functional equations: 
\begin{equation}
    \bar{F}_{X,Y}(s_1 + t,s_2 + t) = \bar{F}_{X,Y}(s_1,s_2) \otimes_h \bar{F}_{X,Y}(t,t), s_1,s_2,t \geq 0
    \label{intr_pseudo_weak}
\end{equation}
and
\begin{equation}
    \bar{F}_{X,Y}(s_1 + t,s_2 + t) = \bar{F}_{X,Y}(s_1,s_2) \otimes_h \bar{F}_{X,Y}(t_1,t_2), \ s_1,s_2,t_1,t_2 \geq 0,
    \label{intr_pseudo_strong}
\end{equation}
where $$a \otimes_h b = h(h^{-1}(a) \ h^{-1}(b)), \ a,b \in [0,1].$$
We say that a distribution satisfying (\ref{intr_pseudo_strong}) possesses pseudo strong lack-of-memory property, while a distribution for which (\ref{intr_pseudo_weak}) holds true is said to have pseudo weak lack-of-memory property.\\
\vskip 0.2 cm
After finding sufficient and necessary conditions under which the solutions of (\ref{intr_pseudo_strong}) and of (\ref{intr_pseudo_weak}) are bivariate survival functions, we focus our attention on the solution of (\ref{intr_pseudo_weak}) which proves to be more flexible and we show that the latter may have a singularity along the line $x=y$.\\
In Ghurye and Marshall (1984) and in Block and Basu (1974), characterizations of distributions satisfying standard weak lack-of-memory property (\ref{intr4}) have been provided. In this paper, we analyse possible extensions to distributions satisfying (\ref{intr_pseudo_weak}): in particular, we show that the characterization given in Black and Basu (1974) still holds true in the pseudo weak lack-of-memory property framework.
\vskip 0.2 cm
Finally, we analyse the impact of the generator $h$ on the dependence structure of the distributions satisfying bivariate pseudo weak lack-of-memory property (\ref{intr4}), determining the Kendall distribution function in full generality and
the lower and the upper tail dependence coefficients in some specific cases.\\
The paper is organized as follows.
In section 2, we recall the main results obtained in Marshall and Olkin (1967) and the definition and the representation of a continuous T-norm.
In section 3, we discuss the conditions under which the solutions of (\ref{intr_pseudo_weak}) and (\ref{intr_pseudo_strong}) are bivariate survival functions; for the distribution satisfying (\ref{intr_pseudo_weak}), we analyse how the singularity mass is spread along the line $x=y$.
In section 4, we analyse possible extensions to pseudo-weak distributions of the characterizations provided in Ghurye and Marshall (1984) and in Block and Basu (1974) in the standard weak lack-of-memory framework.
In section 5, we recover an expression for the Kendall distribution function in full generality
and we determine the lower and the upper tail dependence coefficients for specific choices of marginal survival functions and of the generator $h$.
In section 6, we give conclusions.

\section{Preliminaries}
In this section, we present the main concepts about lack-of-memory properties and T-norms.
In the whole paper, we'll deal with positive-valued random variables with absolutely continuous marginal distributions with support $(0,\infty)$, unless differently specified.
\subsection{Standard Lack of Memory Properties}
A survival distribution $\Bar{G}$ satisfies the standard univariate lack-of-memory property if 
\begin{equation}
    \Bar{G}(s+t) = \Bar{G}(s) \Bar{G}(t), \ s,t \geq 0.
    \label{univariate equation}
\end{equation}
The unique survival function satisfying the above functional equation is well known and is given by $\Bar{G}(x) = e^{-\lambda x}, \lambda > 0$. 
\vskip 0.2 cm
Definitions of bivariate lack-of-memory properties have been given in the seminal paper of Marshall and Olkin (1967).\\
A bivariate survival function $\Bar{G}$ satisfies the strong bivariate lack-of-memory property if 
\begin{equation}
    \bar{G}(s_1+t_1,s_2+t_2) = \bar{G}(s_1,s_2) \Bar{G}(t_1,t_2), \ s_1, s_2, t_1, t_2 \geq 0,
    \label{strong standard equation}
\end{equation}
and the survival function satisfying the above functional equation is given by $\Bar{G}(x,y) = e^{-\lambda_1 x - \lambda_2 y}, \ \lambda_1, \lambda_2 > 0.$\\
Similarly, a bivariate survival function $\Bar{G}$ satisfies the weak bivariate lack-of-memory property if 
\begin{equation}
    \bar{G}(s_1+t,s_2+t) = \bar{G}(s_1,s_2) \ \Bar{G}(t,t), \ s_1, s_2, t \geq 0.
    \label{weak standard equation}
\end{equation}
The solution of the above functional equation is given by
\begin{equation}
    \Bar{G}(x,y) = 
    \begin{cases}
    e^{-\lambda y} \Bar{G}_1(x-y), \ x \geq y \\
     e^{-\lambda x} \Bar{G}_2(y-x), \ x < y
    \end{cases},
    \label{weak standard solution}
\end{equation}
where $\Bar{G}_1$, $\Bar{G}_2$ are univariate  survival functions of positive and absolutely continuous random variables and $\lambda$ is a positive constant.
However, the function (\ref{weak standard solution}) is a survival function with absolutely continuous marginal distributions if and only if 
\begin{equation}
    \begin{cases}
        \lambda \leq g_1(0)+g_2(0) \\
        \frac{\partial \log (g_i(z))}{\partial z}  \geq -\lambda, \ \forall z \geq 0, \ i = 1,2 
    \end{cases},
    \label{system}
\end{equation}
where $g_i(z) = \frac{-\partial \Bar{G}_i(x)}{\partial x}$, see Theorem 5.1 in Marshall and Olkin (1967).
The two conditions of the system above guarantee that the probability mass on the line $x=y$ is $\frac{g_1(0)+g_2(0)}{\lambda}-1 \in [0,1]$ and that the function (\ref{weak standard solution}) is absolutely continuous in the region $\{(x,y): x \geq 0, y \geq 0, x \neq y\}$.
\subsection{T-norm}
Aim of this paper is to generalize (\ref{strong standard equation}) and (\ref{weak standard equation}) substituting the standard product by a T-norm (see, among the others, Klement et al., 2004).
\begin{definition}
A T-norm can be defined as a mapping $T:  [0,1] \times [0,1] \rightarrow[0,1]$ such that: $$T(a,b)=T(b,a);$$
$$T(a,b) \geq T(c,d), \ a \geq c, \ b \geq d;$$
$$T(a,T(b,c))=T(T(a,b),c);$$
$$T(1,a)=a,$$
where $a,b,c,d \in [0,1]$.
\label{T Norm}
Moreover, a T-norm is strict if it is continuous and strictly monotone.
Finally, a T-norm $T$ is Archimedean if and only if it is continuous and $T(x,x) < x, \ x \in (0,1)$.
\end{definition}
The following representation Theorem for a T-norm can be found in Klement et al. (2004).
\begin{theorem}
A function $T$ : $[0, 1] \times [0, 1] \rightarrow{} [0, 1]$ is a strict Archimedean T-norm if and only if there exists a strictly decreasing and continuous function
$t: [0, 1] \rightarrow [0, \infty]$ with $t(1) = 0$ and $t(0) = \infty$, uniquely determined up to a positive multiplicative constant, such that
\begin{equation}
T(a,b) = t^{-1}(t(a) + t(b)), \ \forall a,b \in [0,1].
\label{tnorm_eq}
\end{equation}
\label{theorem t-norm}
\end{theorem}
Notice that any T-norm of type (\ref{tnorm_eq}) can be written in multiplicative form as $T(a,b) = h(h^{-1}(a) h^{-1}(b))$, where $h(x) = t^{-1}(-\log(x)).$ Since a T-norm can be seen as a generalization of the standard product, we will call it "pseudo product" and use the notation $\otimes_h$ to denote
    \begin{equation*}
        a \otimes_h b = h(h^{-1}(a)\ h^{-1}(b)),
    \end{equation*}
    \label{pseudo prod def}
    where $h:[0,1] \rightarrow [0,1]$ is a strictly increasing function with $h(0) = 0$ and $h(1) = 1$, called generator.
Notice that, in the case in which $h = id$, we get the standard product.
\begin{remark}
    The functions $h:[0,1] \rightarrow [0,1]$ and $\hat{h}: [0,1] \rightarrow [0,1]$ generate the same pseudo product, id est 
    $$a \otimes_h b = a \otimes_{\hat{h}} b, \ \forall a,b \in [0,1],$$
    if and only if there exists $\alpha > 0$ such that ${h}(x) = \hat{h}(x^\alpha), \forall x \in [0,1]$.
    This can be easily deduced from the fact that the generator of a strict Archimedean T-norm is defined up to a multiplicative constant.
    \label{rm1}
\end{remark}
For the sake of simplicity, in the rest of this paper, we drop the dependence on $h$ and we'll denote by $a \otimes b$ the pseudo-product $a \otimes_h b$.\\
In line with the generalization of the product, we consider the induced generalization of the Cauchy exponential functional equation 
$$f(x+y) = f(x) \otimes f(y).$$ It can be easily verified that the unique solution of the latter is
$$f(x) = h(a^x), \ a > 0,$$ see Mulinacci and Ricci (2024);
in the particular case in which $a = e^{-1}$, we have
\begin{equation}
\exp_h(x) = h(e^{-x}), \ x \geq 0,
\label{pseudo_exp}
\end{equation}
that will be called "pseudo-exponential function" in the sequel.
\section{Pseudo Lack-of-Memory Properties}
In this section, we generalize univariate and bivariate lack-of-memory properties by substituting in the associated functional equations the standard product by the pseudo one.
\subsection{Univariate Pseudo Lack-of-Memory Property}
\begin{definition}
We say that a survival distribution $\Bar{F}$ satisfies the univariate pseudo lack-of-memory property if
\begin{equation}
    \Bar{F}(s+t) = \Bar{F}(s) \otimes \Bar{F}(t), \ s,t \geq 0.
    \label{pseudo univariate}
\end{equation}
 \\
\end{definition}
Clearly, the pseudo-exponential function in (\ref{pseudo_exp}) is the unique solution of the latter, see Remark \ref{rm1}.
A random variable with survival distribution function (\ref{pseudo_exp}) will be called "pseudo exponential" random variable with parameter $\lambda$.
\begin{remark}
    Notice that any univariate survival distribution satisfies the pseudo lack-of-memory property with respect to a suitable pseudo product. In fact, let $\Bar{H}$ be a univariate survival function: then it satisfies the pseudo univariate lack-of-memory property with respect to $h(x) = \Bar{H}(-\log x)$.
    \label{rm2}
\end{remark}
\subsection{Bivariate Pseudo Strong Lack-of-Memory Property}
\begin{definition}
A bivariate distribution $\Bar{F}$ satisfies the bivariate pseudo strong lack-of-memory property if 
\begin{equation}
    \Bar{F}(s_1+t_1, s_2 + t_2) = \Bar{F}(s_1, s_2) \otimes \Bar{F}(t_1,t_2), \forall s_1,s_2,t_1,t_2 \geq 0.
\end{equation}
    \label{PBLMP_def}
\end{definition}
The solution of the above functional equation is given by 
\begin{equation}
         \Bar{F}(s,t) =  \exp_h(\lambda_1 s + \lambda_2 t), \ \lambda_1, \lambda_2 > 0,
         \label{PBLMP_eq}
\end{equation}
which is a bivariate survival function if and only if $h^{-1}$ is log-concave, that is $\log(h^{-1}(x))$ is concave.
In this case, the associated survival copula is of Archimedean type given by
\begin{equation}
    \bar{C}(u,v) = h(h^{-1}(u) h^{-1}(v)) = u \otimes v.
    \label{PS_Copula}
\end{equation} 
\begin{remark}
Survival functions of type (\ref{PBLMP_eq}) have been identified in Proposition 3.1 in Genest and Kolev (2021) as the class of distributions satisfying Bivariate Law of Uniform Seniority, the latter representing an extension to the bivariate case of the Law of Uniform Seniority introduced in De Morgan (1839).
Distributions of kind (\ref{PBLMP_eq}) represent a generalization of bivariate Schur-Constant distributions recovered when $\lambda_1 = \lambda_2 = 1$ (see, among the others, Chi et al. ,2009).
The authors show that the random variables $\lambda_1 X$ and $\lambda_2 Y$ are identically distributed with convex survival function $\exp_h$ and prove that there exist a random variable $R$, with $P(R \geq r) = \exp_h(r)+r h^{'}(e^{-r}) e^{-r}$, and an uniform random variable $U$, independent of $R$, such that $\lambda_1 X \stackrel{d} = RU$ and $\lambda_2 Y \stackrel{d} = R(1-U).$
\end{remark}
\subsection{Bivariate Pseudo Weak Lack-of-Memory Property}
The generalization of the bivariate weak lack-of-memory property is obtained setting into Definition \ref{PBLMP_def} $t_1 = t_2 = t$.
\begin{definition}
A bivariate distribution $\Bar{F}$ satisfies the bivariate pseudo weak lack-of-memory property if 
\begin{equation}
    \Bar{F}(s_1+t, s_2 + t) = \Bar{F}(s_1, s_2) \otimes \Bar{F}(t,t), \forall s_1,s_2,t \geq 0.
    \label{pseudo weak equation}
\end{equation}
    \label{PWBLMP_def}
\end{definition}
It can be easily verified that the solution of this functional equation is:
\begin{equation}
     \Bar{F}(x,y)  = h(\Bar{G}(x,y)) 
     = \begin{cases}
        h\left(e^{-\lambda y} \Bar{G}_1(x-y)\right) \ x \geq y \\
        h\left(e^{-\lambda x} \Bar{G}_2(y-x)\right) \ x < y
    \end{cases}  = \begin{cases}
        \exp_h(\lambda y)  \otimes \Bar{F}_1(x-y) \ x \geq y \\
        \exp_h(\lambda x)  \otimes \Bar{F}_2(y-x) \ x < y 
    \end{cases},
    \label{PWBLMP_eq}
\end{equation}
where $\bar{G}$ satisfies the standard weak-lack-of-memory property, with $\Bar{G}_1$, $\Bar{G}_2$, $\Bar{F}_1$ and $\Bar{F}_2$ marginal univariate survival functions of non-negative random variables such that $\Bar{F}_i(x) = h(\Bar{G}_i(x)), \ i = 1,2.$

Sufficient and necessary conditions under which (\ref{PWBLMP_eq}) is a bivariate survival function are given in the following Proposition.
\begin{proposition}
     Let $\Bar{F}_1$ and $\Bar{F}_2$ be twice differentiable univariate marginal survival functions of the random variables $X$ and $Y$ and let $h$ be a twice differentiable generator with $h^{'}(x) > 0, \forall x \in [0,1]$. 
     Then (\ref{PWBLMP_eq}) is a survival function if and only if 
     \begin{equation}
\begin{cases}
\frac{\partial^2 \Bar{F}_{X,Y}(x,y)}{\partial x \partial y} \geq 0 \ \forall x \geq 0, \ y \geq 0 \ x \neq y \\
\lambda \leq \frac{f_1(0)+f_2(0)}{h^{'}(1)}
\label{system2}
\end{cases},
\end{equation}
where $f_i(x) = -\bar{F}^{'}_i(x), i = 1,2$.
Moreover, if $ \lambda < \frac{f_1(0)+f_2(0)}{h^{'}(1)}$, the distribution has a singularity along the line $x = y$ with probability mass $\frac{f_1(0)+f_2(0)}{\lambda h^{'}(1)}-1$.
\label{prop1}
\end{proposition}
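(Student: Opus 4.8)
The plan is to verify directly that (\ref{PWBLMP_eq}) is the survival function of a probability measure on $[0,\infty)^2$. The boundary and limit requirements are immediate from the form of $\bar F$: $\bar F(0,0)=h(\bar G(0,0))=h(1)=1$; setting $y=0$ (resp.\ $x=0$) gives the prescribed margins $h(\bar G_1(x))=\bar F_1(x)$ and $h(\bar G_2(y))=\bar F_2(y)$; $\bar F(x,y)\to h(0)=0$ as $x\to\infty$ or $y\to\infty$ since $\bar G_1,\bar G_2$ vanish at infinity; and the two branches of (\ref{PWBLMP_eq}) agree on $x=y$ (both equal $h(e^{-\lambda x})$), so $\bar F$ is continuous and, by the hypotheses on $h$ and on the $\bar F_i$, is $C^2$ off the diagonal. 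Everything then reduces to the $2$-increasing property, i.e.\ nonnegativity of the mass assigned to every rectangle, and to locating the singular part.

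I would organize this around a decomposition of the candidate law into a part absolutely continuous on $\{x\neq y\}$ and a part carried by the diagonal $x=y$. A rectangle not meeting the diagonal receives mass $\iint\partial^2_{xy}\bar F$, and requiring this to be $\geq 0$ for all such rectangles is exactly the first inequality of (\ref{system2}). For a rectangle straddling the diagonal I would cut it along $x=y$: the two resulting triangles again contribute $\iint\partial^2_{xy}\bar F\geq 0$, so the only remaining quantity is the mass carried by the diagonal segment. To compute its line density $a(x)$, I would use the standard identity that the jump of $\partial_y\bar F$ across $x=y$ equals $-a(x)$; differentiating (\ref{PWBLMP_eq}) and letting $y\uparrow x$ and $y\downarrow x$, with $\bar G_1(0)=\bar G_2(0)=1$ and $\bar G_i'(0)=-g_i(0)$, gives
\[ \partial_y\bar F(x,x^-)=h'(e^{-\lambda x})\,e^{-\lambda x}\,(g_1(0)-\lambda),\qquad \partial_y\bar F(x,x^+)=-h'(e^{-\lambda x})\,e^{-\lambda x}\,g_2(0), \]
so $a(x)=h'(e^{-\lambda x})\,e^{-\lambda x}\,(g_1(0)+g_2(0)-\lambda)$. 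Since $\bar F_i=h(\bar G_i)$ forces $f_i(0)=h'(1)g_i(0)$, nonnegativity of $a$ on $[0,\infty)$ — equivalently nonnegativity of the diagonal mass of every segment — is exactly $\lambda\leq(f_1(0)+f_2(0))/h'(1)$, the second inequality of (\ref{system2}).

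Assembling these gives both implications. If the two inequalities hold, the measure $\mu$ with off-diagonal density $\partial^2_{xy}\bar F$ and diagonal line density $a$ has nonnegative parts, and one checks its survival function equals $\bar F$ (same value at the origin, same off-diagonal mixed partials, same jump along $x=y$), so $\bar F$ is a survival function; conversely a survival function is the survival function of its own law, whose off-diagonal Lebesgue density must be $\partial^2_{xy}\bar F\geq 0$ and whose diagonal density must be $a\geq 0$. For the amount of singular mass, $\int_0^\infty a(x)\,dx=(g_1(0)+g_2(0)-\lambda)\int_0^\infty h'(e^{-\lambda x})e^{-\lambda x}\,dx$, and the substitution $w=e^{-\lambda x}$ reduces the integral to $\frac1\lambda\int_0^1 h'(w)\,dw=\frac1\lambda$, so the diagonal carries mass $(g_1(0)+g_2(0))/\lambda-1=(f_1(0)+f_2(0))/(\lambda h'(1))-1$, which is positive precisely when the second inequality of (\ref{system2}) is strict; note that in that case the absolutely continuous part still has positive mass, which is what keeps this below $1$, so no separate lower bound on $\lambda$ is required.

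I expect the main obstacle to be the rigour of the decomposition step: justifying that $\bar F$ carries no singular mass away from the diagonal, that the identity ``jump of $\partial_y\bar F=-a(x)$'' is valid here (where $\bar F$ is continuous across $x=y$ while $\partial_y\bar F$ genuinely jumps), and that the off-diagonal density together with the diagonal density reconstruct $\bar F$. This is essentially the construction carried out in Theorem 5.1 of Marshall and Olkin (1967) for their $\bar G$, so the work is to transport it to $\bar F=h(\bar G)$ and, crucially, to the present setting in which $\bar G$ itself need not be a bivariate survival function.
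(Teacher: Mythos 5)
Your proposal is correct, and its overall skeleton is the same as the paper's: decompose the candidate law into an absolutely continuous part on $\{x\neq y\}$, whose nonnegativity is the first inequality of (\ref{system2}), and a singular part on the diagonal, whose nonnegativity is the second. Where you differ is in how the diagonal mass is obtained. The paper integrates the mixed partial over each half-plane, getting $\iint_{x>y}\partial^2_{xy}\bar F = 1-\tfrac{f_1(0)}{\lambda h'(1)}$ and $\iint_{x<y}\partial^2_{xy}\bar F = 1-\tfrac{f_2(0)}{\lambda h'(1)}$, and then reads off $P(X=Y)$ as the complement to $1$; the bound $P(X=Y)\le 1$ then follows, exactly as you argue, from nonnegativity of the off-diagonal pieces under the first condition. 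You instead compute the diagonal line density $a(x)=h'(e^{-\lambda x})e^{-\lambda x}\left(g_1(0)+g_2(0)-\lambda\right)$ from the jump of $\partial_y\bar F$ across $x=y$ and integrate it (your jump values, the identity $f_i(0)=h'(1)g_i(0)$, and the substitution $w=e^{-\lambda x}$ are all correct). This route buys you more: it localizes the singular mass, so it delivers for free the paper's subsequent proposition that $P(X=Y,\,X\ge t)=\exp_h(\lambda t)\bigl(\tfrac{f_1(0)+f_2(0)}{\lambda h'(1)}-1\bigr)$, whereas the paper's computation only gives the total singular mass and proves that statement separately. The measure-theoretic reconstruction step you flag (no singular mass off the diagonal, the jump identity, reassembling $\bar F$ from the two densities) is indeed the only place needing more rigour, but the paper's own proof is no more detailed on this point, so your proposal is at least as complete as the published argument.
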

\begin{proof}
Under the considered assumptions, the second mixed derivative $\frac{\partial^2 \Bar{F}_{X,Y}}{\partial x \partial y}$ is well-defined.
When $x > y$, $\Bar{F}_{X,Y}(x,y) = h(e^{-\lambda y} h^{-1}(\Bar{F}_1(x-y)))$, so 
\begin{equation}
    \begin{split}
    \int_{0}^{\infty} \int_{0}^{x} \frac{\partial^2 \bar{F}_{X,Y}}{\partial x \partial y} \ dy \ dx & = 
     \int_{0}^{\infty} \frac{\partial \Bar{F}_{X,Y}(x,y)}{\partial x} \big{|}_{y = 0}^{x} \ dx = 
     \int_{0}^{\infty} \left(-h'(\exp(-\lambda x)) \frac{\exp(-\lambda x)}{h'(1)} f_1(0)
    +f_1(x)
    \right) \ dx = \\
    & = \frac{h(\exp(-\lambda x)) f_1(0)}{h^{'}(1) \lambda} - \Bar{F_1}(x) \big{|}_{x = 0}^{\infty} =
    1-\frac{f_1(0)}{\lambda h^{'}(1)} \geq 0;
    \end{split}
    \label{condition1}
\end{equation}

by the same way of reasoning, if $x < y$, we have that
\begin{equation}
    \int_{0}^{\infty} \int_{0}^{y} \frac{\partial^2 \bar{F}_{X,Y}}{\partial x \partial y} \ dx \ dy =
    1-\frac{f_2(0)}{\lambda h^{'}(1)} \geq 0.
    \label{condition2}
\end{equation}
Hence, if the vector $(X,Y) \sim F$, it follows that
\begin{equation}
    P(X = Y)  = 1-\left(1-\frac{f_1(0)}{\lambda h^{'}(1)}\right)-\left(1-\frac{f_1(0)}{\lambda h^{'}(1)}\right) 
    = \frac{f_1(0)+f_2(0)}{\lambda h^{'}(1)}-1,
    \label{condition3}
\end{equation}
which is non-negative if and only if the last condition of the system (\ref{system2}) holds true: the fact that it is bounded above from $1$ comes from (\ref{condition1}) and (\ref{condition2}). 
\end{proof}
\vskip 0.2 cm
Notice that (\ref{condition3}) can be rewritten as $$P(X= Y) = \frac{g_1(0)+g_2(0)}{\lambda}-1$$ where $g_i = -\Bar{G}^{'}$, meaning that the singularity mass does not depend on the generator $h$ but only on the marginal densities of $\Bar{G}_{X,Y}$ satisfying functional equation (\ref{weak standard equation}) from which $\Bar{F}_{X,Y}$ is obtained as a distortion;
however, the choice of $h$ influences how that mass is distributed along the line $x = y$, as we show in next Proposition.
\begin{proposition}
    Let $\Bar{F}_{X,Y}$ be a survival function satisfying Definition \ref{PWBLMP_def}. Then, under the same assumptions of Proposition \ref{prop1}, we have 
    \begin{equation}
    P(X = Y, X \geq t) = 
    \exp_h(\lambda t) \left(\frac{f_1(0)+f_2(0)}{\lambda h^{'}(1)}-1\right).
    \label{eq prop2}
    \end{equation}
\end{proposition}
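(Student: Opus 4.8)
The plan is to obtain (\ref{eq prop2}) by slicing the event $\{\min(X,Y)\ge t\}$ according to the sign of $X-Y$ and using that the survival function on the diagonal equals $\exp_h(\lambda t)$. Since $\bar F_1=h\circ\bar G_1$ and $\bar F_2=h\circ\bar G_2$ are continuous, the marginals and hence $\min(X,Y)$ are continuous random variables, so $P(\min(X,Y)\ge t)=P(X>t,Y>t)=\bar F(t,t)=h(e^{-\lambda t})=\exp_h(\lambda t)$. On $\{X=Y\}$ one has $\min(X,Y)=X=Y$, so $\{\min(X,Y)\ge t\}$ is the disjoint union of $\{X>Y\ge t\}$, $\{Y>X\ge t\}$ and $\{X=Y,\,X\ge t\}$, whence
\begin{equation*}
\exp_h(\lambda t)=P(X>Y\ge t)+P(Y>X\ge t)+P(X=Y,\,X\ge t).
\end{equation*}
It thus suffices to evaluate the two absolutely continuous probabilities and solve for the singular one.

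For $P(X>Y\ge t)$ I would integrate the density $\frac{\partial^2\bar F}{\partial x\,\partial y}$ over $\{x>y\ge t\}$, doing the $y$-integration first over $y\in(t,x)$, where one stays in the branch $\bar F(x,y)=h(e^{-\lambda y}\bar G_1(x-y))$. Following the computation in the proof of Proposition \ref{prop1}, the fundamental theorem of calculus gives
\begin{equation*}
\int_t^x\frac{\partial^2\bar F}{\partial x\,\partial y}\,dy=\frac{\partial\bar F}{\partial x}(x,x^-)-\frac{\partial\bar F}{\partial x}(x,t),\qquad
\frac{\partial\bar F}{\partial x}(x,x^-)=-h'(e^{-\lambda x})\,e^{-\lambda x}\,g_1(0),
\end{equation*}
where $\frac{\partial\bar F}{\partial x}(x,x^-)$ denotes the one-sided limit as $y\uparrow x$ and $g_1(0)=-\bar G_1'(0)=f_1(0)/h'(1)$ (differentiate $\bar F_1=h\circ\bar G_1$ at $0$ and use $\bar G_1(0)=1$). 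Integrating in $x$ over $(t,\infty)$, the term $-\frac{\partial\bar F}{\partial x}(x,t)$ contributes $\bar F(t,t)-\bar F(\infty,t)=h(e^{-\lambda t})$ (since $\bar G_1(x-t)\to0$ forces $\bar F(\infty,t)=0$), while the substitution $w=e^{-\lambda x}$ gives $\int_t^\infty h'(e^{-\lambda x})e^{-\lambda x}\,dx=\frac1\lambda\int_0^{e^{-\lambda t}}h'(w)\,dw=\frac{h(e^{-\lambda t})}{\lambda}$. Hence $P(X>Y\ge t)=h(e^{-\lambda t})\bigl(1-\tfrac{f_1(0)}{\lambda h'(1)}\bigr)$, and the symmetric computation in the branch $\bar F(x,y)=h(e^{-\lambda x}\bar G_2(y-x))$ yields $P(Y>X\ge t)=h(e^{-\lambda t})\bigl(1-\tfrac{f_2(0)}{\lambda h'(1)}\bigr)$.

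Substituting these two expressions into the displayed decomposition and solving for $P(X=Y,\,X\ge t)$ gives $\exp_h(\lambda t)\bigl(\tfrac{f_1(0)+f_2(0)}{\lambda h'(1)}-1\bigr)$, which is (\ref{eq prop2}). The steps requiring care are the treatment of the diagonal as a boundary — the density exists only off $\{x=y\}$, so the endpoint value $\frac{\partial\bar F}{\partial x}(x,x^-)$ must be read off the appropriate branch of $\bar F$ as a one-sided limit — and the appeal to continuity of the marginals in order to replace $P(\min(X,Y)\ge t)$ by $\bar F(t,t)$. Integrating the inner variable first (rather than last) conveniently sidesteps any delicate estimate at $+\infty$, using only $\bar F(\infty,t)=0$.
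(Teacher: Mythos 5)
Your proposal is correct and follows essentially the same route as the paper: it writes $P(X=Y,\,X\ge t)=\bar F(t,t)-P(X>Y\ge t)-P(Y>X\ge t)$ and evaluates $P(X>Y\ge t)$ by the same iterated integral of $\frac{\partial^2\bar F}{\partial x\,\partial y}$ over $\{x>y\ge t\}$, inner integration in $y$ first, with the diagonal value read as the one-sided limit. The only difference is cosmetic bookkeeping in how the antiderivatives are grouped, so nothing further is needed.
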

\begin{proof}
    Since
    $$P( X = Y, X \geq t) = \Bar{F}_{X,Y}(t,t)-P(X > Y \geq t)-P(Y > X \geq t) $$
    and $\Bar{F}_{X,Y}(t,t) = \exp_h(\lambda t)$,
    we have
    \begin{equation*}
        \begin{split}
            P(X > Y \geq t) & = \int_{t}^{+ \infty} \int_{t}^{x} 
            \frac{\partial^2 \Bar{F}_{X,Y}(x,y)}{\partial x \partial y} \ dy \ dx = \int_{t}^{\infty} \frac{\partial \Bar{F}_{X,Y}(x,y)}{\partial x} \big{|}_{y = t}^{x} \ dx = \\ & = 
             \int_{t}^{\infty}
            -h^{'}(e^{-\lambda x}) e^{-\lambda x} \frac{f_1(0)}{h^{'}(1)} \ dx+  
             h^{'}(e^{-\lambda t} h^{-1}(\Bar{F}_1(x-t))) \ e^{-\lambda t} \frac{f_1(x-t)}{h^{'}(h^{-1}(\Bar{F}_1(x-t)))} \ dx  = \\
             & = h(e^{-\lambda t}) \left(1-\frac{f_1(0)}{\lambda h^{'}(1)}\right).
        \end{split}
    \end{equation*}
    By the same way of reasoning,
    $$P(Y > X \geq t) = h(e^{-\lambda t}) \left(1-\frac{f_2(0)}{\lambda h^{'}(1)}\right),$$
    from which the conclusion follows trivially.
\end{proof}
\vskip 0.2 cm
In general, it is difficult to check when (\ref{PWBLMP_eq}) is a bivariate survival function;
however, thanks to Sklar Theorem, (\ref{PWBLMP_eq}) is a joint survival distribution if and only if $\Bar{C}(u,v) = \Bar{F}_{X,Y}(\Bar{F}_1^{-1}(u), \Bar{F}_2^{-1}(v))$ is a copula.
Notice that 
\begin{equation}
\begin{split}
             & \Bar{C}^{F}(u,v) = \\
             & = 
              h(e^{-\lambda \Bar{F_2}^{-1}(v)} \bar{F}_1(\Bar{F_1}^{-1}
            (u)-\bar{F}_2^{-1}(v)) 1_{\Bar{F_1}^{-1}
            (u)-\bar{F}_2^{-1}(v) \geq 0} + e^{-\lambda \Bar{F}_1^{-1}(u)} \bar{F}_2(\Bar{F_2}^{-1}
            (v)-\bar{F}_1^{-1}(u)) 1_{\Bar{F_2}^{-1}
            (v)-\bar{F}_1^{-1}(u) > 0}) = \\
            & = h(\bar{C}^{G}(h^{-1}(u), h^{-1}(v))),
\end{split}
        \label{distortion copula}
\end{equation}
where 
\begin{equation}
\bar{C}^G(u,v)
 = e^{-\lambda \bar{G}_2^{-1}(v)} \bar{G}_1(\bar{G}_1^{-1}(u)-\bar{G}_2^{-1}(v)) 1_{\bar{G}_1^{-1}(u) \geq \bar{G}_2^{-1}(v)}
 + 
e^{-\lambda \bar{G}_1^{-1}(u)} \bar{G}_2(\bar{G}_2^{-1}(v)-\bar{G}_1^{-1}(u)) 1_{\bar{G}_2^{-1}(v) > \bar{G}_1^{-1}(u)}.
\label{C^G}
\end{equation}
$\bar{C}^{\bar{F}}$ is a copula if $\bar{C}^{\bar{G}}$ is a copula and if $h$ is convex, see Klement et al. (2004).
In Ghiselli Ricci (2024), the author provides sufficient conditions under which a distortion of a copula is still a copula: the results therein can be applied to our framework in the following ways.
\begin{enumerate}
    \item If $h$ and $h^{-1}$ are absolutely continuous, and if $\frac{\partial \bar{C}^{\bar{F}}(u,v)}{\partial u}$ is increasing in $v$ almost everywhere, then $\bar{C}^{\bar{F}}$ is a copula, see Proposition 6.4 in Ghiselli Ricci (2024).
    \item Let $\mu_{\bar{C}^{\bar{G}},u}(t) = \Bar{C}^{\Bar{G}}(t,u)$ \color{black}, $\Psi(u,v) = \frac{\frac{\partial \Bar{C}^{\bar{G}}(u,v)}{\partial u}}{\bar{C}^{\bar{G}}(u,v)}$ and $\sigma_h(x) = \frac{h^{-1}(x)}{(h^{-1})'(x)}$.
 If $\mu^{-1}_{\bar{C}^{\bar{G}},u}$ is absolutely continuous for any $u > 0$, if $\Psi(u,v)$ is strictly increasing in second place and if $\sigma_h$ is increasing, then $\Bar{C}^{\bar{F}}$ is a copula, see Theorem 6.6 in Ghiselli Ricci (2024).
\end{enumerate}
\subsubsection{Examples and Specific Cases} 
The fact that $\Bar{F}_{X,Y}$ is a bivariate survival function does not imply that $\bar{G}_{X,Y} = h^{-1}(\bar{F}_{X,Y})$ is a bivariate survival function, as we show in the next Example.
\begin{example}
Let $h(x) = \left(\frac{e^{\theta x}-1}{e^{\theta}-1}\right)^{\beta}$ and let $\bar{F}_1(x) = \bar{F}_2(x) = h((1+x)^{-\alpha})$, with $\alpha > 0$, $\beta > 1$, $\theta > 0$ and
$\max\left(\alpha + \frac{1}{\beta},\frac{\alpha+1+\alpha \beta}{\beta+1} \right)\leq \lambda \leq \min(\alpha+1,2 \alpha)$.
Then the function $\bar{F}_{X,Y}$ satisfying definition \ref{PWBLMP_def} with marginals $\bar{F}_1$, $\bar{F}_2$ is given by 
\begin{equation*}
    \bar{F}_{X,Y}(x,y)
     = \left(\frac{e^{\theta e^{-\lambda y} (1+x-y)^{-\alpha}}-1}{e^\theta-1}\right)^\beta 1_{x \geq y} +  \left(\frac{e^{\theta e^{-\lambda x} (1+y-x)^{-\alpha}}-1}{e^\theta-1}\right)^\beta 1_{x < y}.
\end{equation*}
Moreover, let us consider the function $\bar{G}_{X,Y}(x,y) = h^{-1}(\bar{F}_{X,Y}(x,y))$, that is given by 
\begin{equation*}
    \bar{G}_{X,Y}(x,y) = e^{-\lambda y}(1+x-y) ^ {-\alpha} 1_{x \geq y} + 
    e^{-\lambda x} (1+y-x) ^ {-\alpha} 1_{x < y}:
\end{equation*}
then its second mixed derivative $g(x,y)$ is given by
\begin{equation*}
    g(x,y)  \dfrac{{\alpha}\cdot\left(-{\lambda}y+\left(x+1\right){\lambda}-{\alpha}-1\right)\mathrm{e}^{-{\lambda}y}}{\left(-y+x+1\right)^{\alpha+2}}1_{x > y}  +\dfrac{{\alpha}\cdot\left(-{\lambda}x+\left(y+1\right){\lambda}-{\alpha}-1\right)\mathrm{e}^{-{\lambda}x}}{\left(-x+y+1\right)^{\alpha+2}}1_{x < y}.
\end{equation*}
Under the conditions on the parameters stated above, we have that, when $(x,y) \rightarrow (0^{+},0^{+})$, $g(x,y) \rightarrow -\alpha(\alpha+1-\lambda) < 0$, so $g$ is not a density function.\\
Furthermore, let us consider the second-mixed derivative of the function $\bar{F}_{X,Y}$ on the set $\{(x,y): x > y \geq 0\}$:
setting $u = e^{-\lambda y}$, $k = x-y$, with $0 < u \leq 1$, $0 < k < \infty$, after some algebra, we have
\begin{equation}
\begin{split}
   & \frac{\partial^2 \bar{F}_{X,Y}}{\partial x \partial y} \left(k-\frac{\log(u)}{\lambda},-\frac{\log(u)}{\lambda}\right) = \\
   & = C(u,k)\{
     \theta [\beta e^{\theta u (1+k)^{-\alpha}}-1] 
      (\lambda + \lambda k -\alpha)u + [e^{\theta u (1+k)^{-\alpha}}-1]  
      (\lambda + \lambda k-\alpha-1)(1+k)^{\alpha}
    \},
    \label{f}
    \end{split}
\end{equation}
where $C(u,k) = \frac{\alpha \beta \theta u}{(e^{\theta}-1)^{\beta}} \ e^{\theta u
     (1+k)^{-\alpha}}
     \ [e^{u \theta (1+k)^{-\alpha}}-1]^{\beta-2} \ (1+k)^{-2\alpha-2} > 0$.
Under the conditions on the parameters stated above,  the function (\ref{f}) is non-negative if 
$k \geq \frac{\alpha+1}{\lambda}-1, \forall u \in (0,1]$, so let us focus on the case in which $0 < k < \frac{\alpha+1}{\lambda}-1$.\\
For this purpose, we define $\rho(k) = \theta (\lambda + \lambda k-\alpha)$, $\gamma(k) = (1+k)^\alpha (\lambda + \lambda k -\alpha -1)$ and $z(k) = \theta (1+k)^{-\alpha}$: since $0 < k < \frac{\alpha+1}{\lambda}-1$ and $\lambda \geq \alpha$, we can easily see that $\rho(k) \geq 0$, $z(k) > 0$ and  $\gamma(k) < 0$.
Basically, we need that the function 
\begin{equation}
    w(u,k) = \rho(k)(\beta e^{u z(k)}-1)u+\gamma(k)(e^{u z(k)}-1)
\end{equation}
is non-negative in the set $\left\{ (u,k): 0 < u \leq 1, 0 < k < \frac{\alpha+1}{\lambda}-1)\right\}$.
But its first partial derivative  
with respect to $u$ is non-negative in that rectangle,
implying that the infimum of $w(u,k)$ on that set is obtained as $u \rightarrow 0^{+}$ and it is equal to $0$.
Similar results hold when $x < y$.\\
So we conclude that $\bar{F}_{X,Y}$ is a bivariate survival function 
but, under the same conditions on the parameters, $\bar{G}_{X,Y}$ is not a bivariate survival function.\\

\end{example}

\vskip 0.3 cm

It is well-known that the only distribution satisfying bivariate standard weak lack-of-memory property with marginals satisfying univariate standard lack-of-memory property is the exponential Marshall-Olkin distribution with survival function given by (\ref{intr4}).

A similar result can be found for the pseudo version of the lack-of-memory properties, as we show in next Proposition.
\begin{proposition}
Let $h:[0,1] \rightarrow [0,1]$ be a generator such that $h^{-1}(x)$ is log-concave.
The only distribution satisfying pseudo bivariate weak lack-of-memory property, with  generator $h$, with marginal survival functions satisfying pseudo univariate lack-of-memory property with the same generator $h$, is 
\begin{equation}
    \begin{split}
     \bar{F}_{X,Y}(x,y)  = \begin{cases}
    \exp_h(\lambda y) \otimes \exp_h(\gamma_1(x-y)), \ x \geq y \\
    \exp_h(\lambda x) \otimes \exp_h(\gamma_2(y-x)), \ x < y \\
    \end{cases}  = \exp_h(\lambda_1 x + \lambda_2 y + \lambda_0 \max(x,y)),
    \label{uni and biv eq}
    \end{split}
\end{equation}
with $0 < \max(\gamma_1,\gamma_2) \leq \lambda \leq \gamma_1 + \gamma_2$ and with $\lambda_1 = \lambda-\gamma_2$, $\lambda_2 = \lambda-\gamma_1$ and $\lambda_0 = \gamma_1+\gamma_2 - \lambda$.\\
\label{prop 3.3.3}
\end{proposition}
\begin{proof}
    This is an immediate consequence of (\ref{PWBLMP_eq}) and of Remark \ref{rm2}.
\end{proof}
Exactly as the classical Marshall and Olkin distribution, the survival function (\ref{uni and biv eq}) can be obtained from the following construction based on a shock model.
In fact, let us consider three random variables $Z_1$,$Z_2$ and $Z_3$ with survival distribution functions
$\bar{F}_{Z_i}(x)= \exp_h(\lambda_i x), i = 1,2,3$ such that
$$P[Z_1 > z_1, Z_2 > z_2, Z_3 > z_3] =  
\exp_h(\lambda_1 z_1) \otimes \exp_h(\lambda_2 z_2) \otimes \exp_h(\lambda_3 z_3),$$
meaning that the associated survival copula is of Archimedean type with generator $\psi(x) = h(e^{-x})$ if $h^{''}(t) \ t^2 + h^{'}(t) \ t \geq 0$ and $h^{'''}(t) \ t^3  + 3 h^{''} (t) t^2 + h^{'}(t) \ t \geq 0, \forall t \in [0,1]$, see Theorem 2 in McNeil and Neslehovà (2009).
Furthermore, let us consider the random variables $X = \min(Z_1,Z_3)$ and $Y = \min(Z_2,Z_3)$.
Then
\begin{equation}
    \begin{split}
         P[X > x, Y > y] & = P[Z_1 > x, Z_2 > y, Z_3 > \max(x,y)]  = \exp_h(\lambda_1 x + \lambda_2 y +\lambda_3 \max(x,y)).
    \end{split}
\end{equation}
Survival functions of type (\ref{uni and biv eq}) represent a particular specification of the family of distributions considered in equation (1) in Mulinacci (2018), when, according to the notation therein, $G(x) = \exp_h(x)$ and $H_i(x) = \lambda_i x, \ i = 1,2,3.$
\vskip 0.3 cm
Unlike the model considered in Proposition \ref{prop 3.3.3}, in the following example we analyse the case in which the bivariate survival function $\bar{F}$ satisfies pseudo weak lack-of-memory property while the marginal distributions satisfy univariate standard lack-of-memory property.
\begin{example}
If $h(x) = \frac{e^{\theta x}-1}{e^{\theta}-1}$ and if $\Bar{F}_i(x) = e^{-\gamma_i x}, \ i = 1,2$, then the function
\begin{equation}
 \begin{split}
     & \Bar{F}(x,y)  = \\
     & = \frac{\exp\{e^{-\lambda y} \log((e^{\theta}-1) e^{-\gamma_1(x-y)}+1)\}-1}{e^{\theta}-1} 1_{x \geq y} + \frac{\exp\{e^{-\lambda x} \log((e^{\theta}-1) e^{-\gamma_2(y-x)}+1)\}-1}{e^{\theta}-1} 1_{x < y}
    \end{split}
    \label{survival example 2}
\end{equation}
is a survival function satisfying (\ref{pseudo weak equation}) if $$
\max(\gamma_1,\gamma_2) \cdot \max\left(1,\frac{e^{\theta}-1}{\theta e^\theta}\right) \leq \lambda \leq 
\frac{(\gamma_1+\gamma_2)(e^{\theta}-1)}{\theta e^\theta}.
$$
Using the conditional distribution method, see Nelsen (2006), we generate data from this distribution with  parameters $\gamma_1 = 0.5$, $\gamma_2 = 0.6$ and $\lambda = 0.645$.
In Figure \ref{fig:three graphs}, we show the scatterplots from (\ref{survival example 2}) for three different values of the parameter $\theta$: we can see that dependence decreases as $\theta$ increases.

\end{example}

\section{Properties and Characterizations of  Pseudo Weak Case}
In the literature, many characterizations of distributions satisfying standard lack-of-memory property have been provided (see, among the others, Kulkarni, 2006).
We report here the characterization provided in section 3 in Ghurye and Marshall (1984) in the bi-dimensional case.
\begin{theorem}
    The survival distribution $\Bar{F}_{X,Y}$ of the vector $(X,Y)$ satisfies standard weak lack-of-memory property if and only if there exist random variables $U$ and $(W_1,W_2)$ such that
    \begin{enumerate}
        \item $(X,Y) = (U,U)  + (W_1,W_2)$;
        \item $U$ and $(W_1,W_2)$ are independent;
        \item $P(\min(W_1,W_2) = 0) = 1$;
        \item $U$ has an exponential distribution.
    \end{enumerate}
\end{theorem}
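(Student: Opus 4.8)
\emph{Proof strategy.} The plan is to extract both implications directly from the $n$-variate weak lack-of-memory equation $\bar{F}(\bar{s}+t\bar{e})=\bar{F}(\bar{s})\,\bar{F}(t\bar{e})$. For the ``if'' direction I would assume $\bar{X}=U\bar{e}+\bar{W}$ with $U\sim\mathrm{Exp}(\lambda)$ independent of $\bar{W}$ and $\min_i W_i=0$ a.s., and simply compute
$$\bar{F}(\bar{s}+t\bar{e})=P\big(U+W_i>s_i+t\ \forall i\big)=P\big(U>t+\max_i(s_i-W_i)\big).$$
The key observation is that $\max_i(s_i-W_i)\ge 0$ almost surely, because the (random) index that attains $\min_i W_i=0$ contributes $s_i-W_i=s_i\ge 0$; this lets me split the exponential tail as $e^{-\lambda t}\cdot e^{-\lambda\max_i(s_i-W_i)}$ and, after conditioning on $\bar{W}$ and using independence, recognize the two factors as $\bar{F}(t\bar{e})$ and $\bar{F}(\bar{s})$. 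That settles ``if''.

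For the ``only if'' direction I would set $U=\min_i X_i$ and $W_i=X_i-U$, so that (1) and (3) hold automatically, and it remains to prove (4) and (2). Property (4) is immediate: substituting $s_1=\cdots=s_n=s$ into the functional equation gives $\varphi(s+t)=\varphi(s)\varphi(t)$ for $\varphi(t):=\bar{F}(t\bar{e})=P(U>t)$, and the only non-increasing right-continuous solution with $\varphi(0)=1$ and $\varphi(\infty)=0$ is $\varphi(t)=e^{-\lambda t}$ with $\lambda\in(0,\infty)$ (the degenerate cases $U\equiv 0$ or $U\equiv\infty$ being excluded because the $X_i$ are proper, positive and continuous).

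For (2) the engine is a ``shift identity'': applying the functional equation with $\bar{s}=\bar{x}-(\min_i x_i)\bar{e}$ and $t=\min_i x_i$ gives $\bar{F}(\bar{x})=e^{-\lambda\min_i x_i}\,\bar{F}\big(\bar{x}-(\min_i x_i)\bar{e}\big)$. The form I actually want says that for every $u\ge 0$ and every upper orthant $B=\prod_i(a_i,\infty)$ one has, writing $\bar{a}^+=(a_1^+,\dots,a_n^+)$,
$$P\big(\bar{X}-u\bar{e}\in B,\ U>u\big)=\bar{F}\big(\bar{a}^++u\bar{e}\big)=e^{-\lambda u}\,\bar{F}(\bar{a}^+)=e^{-\lambda u}\,P(\bar{X}\in B),$$
since $\{X_i>u+a_i\ \forall i\}\cap\{U>u\}=\{X_i>u+a_i^+\ \forall i\}$ and $P(X_i=0)=0$. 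As upper orthants form a $\pi$-system generating the Borel $\sigma$-algebra of $\mathbf{R}^n$ and both sides are finite measures in $B$ that agree on $\mathbf{R}^n$, a monotone-class argument extends the identity to all Borel $B$.

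To finish (2), I would note that on $\{U>u\}$ the shifted vector $\bar{X}-u\bar{e}$ has minimal coordinate $U-u$ and ``shape'' exactly $\bar{W}$, so $(U-u,\bar{W})$ is the image of $\bar{X}-u\bar{e}$ under the same map $\bar{x}\mapsto(\min_i x_i,\ \bar{x}-(\min_i x_i)\bar{e})$ that sends $\bar{X}$ to $(U,\bar{W})$. Taking $B$ to be the preimage of $(c,\infty)\times A$ under this map and plugging into the identity above yields $P(U>u+c,\ \bar{W}\in A)=e^{-\lambda u}\,P(U>c,\ \bar{W}\in A)$ for all $c\ge 0$ and Borel $A$; setting $c=0$ and using $P(U>0)=1$ gives $P(U>u,\ \bar{W}\in A)=e^{-\lambda u}\,P(\bar{W}\in A)=P(U>u)\,P(\bar{W}\in A)$, which is precisely the independence in (2). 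I expect no delicate estimate anywhere; the only place that needs genuine care is the independence argument — specifically the passage from orthants to Borel sets, the correct identification of the pushforward map, and the degenerate endpoints of $\lambda$.
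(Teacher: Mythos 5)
Your argument is correct, but note that the paper itself offers no proof of this statement: it is quoted verbatim as a known result from Section~3 of Ghurye and Marshall (1984), so there is no in-paper argument to compare against. Your reconstruction is essentially the classical one and is sound in both directions. In the ``if'' direction the reduction to $P\bigl(U>t+\max_i(s_i-W_i)\bigr)$, the observation that $\max_i(s_i-W_i)\ge 0$ a.s.\ because the minimizing coordinate of $\bar W$ vanishes, and the conditioning on $\bar W$ are exactly what is needed; you also implicitly use $\min_i W_i=0$ to get $\bar F(t\bar e)=e^{-\lambda t}$, which is fine. In the ``only if'' direction, the shift identity $\bar F(\bar a^++u\bar e)=e^{-\lambda u}\bar F(\bar a^+)$, the careful handling of orthants with negative corners via $P(X_i=0)=0$, the $\pi$-system extension to Borel sets, and the pushforward under $\bar x\mapsto(\min_i x_i,\ \bar x-(\min_i x_i)\bar e)$ give $P(U>u+c,\ \bar W\in A)=e^{-\lambda u}P(U>c,\ \bar W\in A)$, from which independence follows; strictly speaking, factorization of $P(U>u,\ \bar W\in A)$ for all $u\ge 0$ and Borel $A$ yields full independence only after one more routine uniqueness-of-measures step (the sets $(u,\infty)$ form a generating $\pi$-system for the law of $U$, which is supported on $(0,\infty)$), but that is the same argument you already invoked and is standard. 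Your treatment of the degenerate endpoints of the Cauchy equation (excluding $U\equiv 0$ and $U\equiv\infty$ via positivity, continuity and properness of the $X_i$, which are standing assumptions of the paper) is also adequate. In spirit this matches the original Ghurye--Marshall proof, which likewise rests on the observation that, given $U>u$, the shifted vector $\bar X-u\bar e$ has the same law as $\bar X$ up to the exponential factor $e^{-\lambda u}$.
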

The authors actually show that $U = \min(X,Y)$;
moreover, they prove that the random variables $U = \min(X,Y)$ and $N = \mathbf{1}_{\{X > Y\}}-\mathbf{1}_{\{X < Y\}}$ are independent.
\vskip 0.2 cm
In the same spirit, as for the case of pseudo weak lack-of-memory property, we get the following result.
\begin{proposition}
    Let $\Bar{F}_{X,Y}$ be a bivariate survival function satisfying pseudo weak lack-of-memory property with marginal survival functions $\bar{F}_1$ and $\bar{F}_2$;
    moreover, let $(W_1,W_2) = (X - U,Y - U)$, where $U = min(X,Y)$.
    Then:
    \begin{enumerate}
        \item $P(\min(W_1,W_2) = 0) = 1$;
        \item $U$ has a pseudo exponential distribution with parameter $\lambda$;
        \item $N = \mathbf{1}_{X > Y}-\mathbf{1}_{X < Y}$ and $U$ are independent;
        \item The joint distribution of $U$ and the vector $(W_1,W_2)$ is given by
        \begin{equation*}
        P(U \geq u, \Bar{W} \geq \Bar{w})  = \begin{cases}
            \exp_h(\lambda u), \ w_1 \leq 0, w_2 \leq 0 \\
            \exp_h(\lambda u) \otimes \bar{F}_1(w_1) \left(1-\frac{r_1(w_1)}{\lambda}\right), \ w_1 > 0, w_2 \leq 0 \\
            \exp_h(\lambda u) \otimes \bar{F}_2(w_2) \left(1-\frac{r_2(w_2)}{\lambda}\right), \ w_1 \leq 0, w_2 > 0 \\
            0, \ w_1 > 0, w_2 > 0
        \end{cases},
        \end{equation*}
        where $r_i$ is the hazard rate of the survival distribution $\Bar{G}_i, \ i = 1,2$.
    \end{enumerate}
\end{proposition}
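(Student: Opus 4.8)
The plan is to treat the four items in order, with item~4 (the joint survival function of $(U,\bar W)$) as the computational heart: once it is established, item~2 is a specialization and item~3 follows by a continuity-from-below limit, while item~1 is essentially definitional. Indeed, since $U=\min(X,Y)$ at least one of $W_1=X-U$, $W_2=Y-U$ vanishes, so $\min(W_1,W_2)=0$ identically; in particular $W_1\ge 0$ and $W_2\ge 0$ almost surely, a fact used throughout.

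For item~4 I would split according to the signs of $w_1,w_2$. If $w_1\le 0$ and $w_2\le 0$, then $\{W_1\ge w_1,W_2\ge w_2\}$ is a.s.\ the whole space, so $P(U\ge u,\bar W\ge\bar w)=P(\min(X,Y)\ge u)=\bar F(u,u)=h(e^{-\lambda u})=\exp_h(\lambda u)$; if $w_1>0$ and $w_2>0$ the event forces $X>U$ and $Y>U$ at once, which is impossible, so the probability is $0$. The substantive case is the mixed one, say $w_1>0$, $w_2\le 0$ (the other being symmetric). Using $\min(W_1,W_2)=0$ a.s., on $\{W_1\ge w_1\}$ we have $W_2=0$, i.e.\ $Y=U$, so the event equals $\{Y\ge u,\ X\ge Y+w_1\}$, which lies entirely in the region $x>y$ where $\bar F(x,y)=h\!\left(e^{-\lambda y}\bar G_1(x-y)\right)$. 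I would then compute
\[
P(Y\ge u,\ X\ge Y+w_1)=\int_u^{\infty}\!\int_{y+w_1}^{\infty}\frac{\partial^2\bar F}{\partial x\,\partial y}(x,y)\,dx\,dy=\int_u^{\infty}\Bigl(-\tfrac{\partial\bar F}{\partial y}(y+w_1,y)\Bigr)\,dy,
\]
the boundary term at $x=\infty$ vanishing. Differentiating the explicit form of $\bar F$ gives $-\tfrac{\partial\bar F}{\partial y}(y+w_1,y)=h'\!\bigl(e^{-\lambda y}\bar G_1(w_1)\bigr)\,e^{-\lambda y}\bigl(\lambda\bar G_1(w_1)-g_1(w_1)\bigr)$, and the substitution $t=e^{-\lambda y}\bar G_1(w_1)$ together with $h(0)=0$ reduces the remaining integral to $\tfrac{1}{\lambda\bar G_1(w_1)}\,h\!\bigl(e^{-\lambda u}\bar G_1(w_1)\bigr)$. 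Recalling $\bar F_1=h\circ\bar G_1$, $r_1=g_1/\bar G_1$ and $a\otimes b=h\!\bigl(h^{-1}(a)h^{-1}(b)\bigr)$, so that $h\!\bigl(e^{-\lambda u}\bar G_1(w_1)\bigr)=\exp_h(\lambda u)\otimes\bar F_1(w_1)$, this collapses to $\exp_h(\lambda u)\otimes\bar F_1(w_1)\bigl(1-r_1(w_1)/\lambda\bigr)$, the claimed value.

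Item~2 then follows by taking $w_1=w_2=0$ above: $P(U\ge u)=\exp_h(\lambda u)=h(e^{-\lambda u})$, the pseudo-exponential survival function with parameter $\lambda$ from~(\ref{pseudo exp}). For item~3, since $\{N=1\}=\{X>Y\}=\{W_1>0\}=\bigcup_{w_1>0}\{W_1\ge w_1\}$, continuity from below and the mixed-quadrant formula (let $w_1\downarrow 0$, using $\bar F_1(0)=1$, $1\otimes a=a$ and $r_1(0)=g_1(0)$) give $P(N=1,\,U\ge u)=\exp_h(\lambda u)\bigl(1-g_1(0)/\lambda\bigr)$, and symmetrically $P(N=-1,\,U\ge u)=\exp_h(\lambda u)\bigl(1-g_2(0)/\lambda\bigr)$; for $N=0$ I would use $\{N=0,U\ge u\}=\{X=Y,X\ge u\}$ and quote (\ref{eq prop2}) together with $\tfrac{f_1(0)+f_2(0)}{\lambda h'(1)}=\tfrac{g_1(0)+g_2(0)}{\lambda}$ (a consequence of $f_i(0)=h'(1)g_i(0)$) to obtain $P(N=0,\,U\ge u)=\exp_h(\lambda u)\bigl(\tfrac{g_1(0)+g_2(0)}{\lambda}-1\bigr)$. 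In each case $P(N=n,U\ge u)=c_n\exp_h(\lambda u)$ with $c_n=P(N=n)$ (set $u=0$) and $\exp_h(\lambda u)=P(U\ge u)$, so $N$ and $U$ are independent; nonnegativity of $c_1,c_{-1},c_0$ is exactly what the inequalities in Proposition~\ref{prop1} guarantee.

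The main obstacle is the computation in the mixed quadrant of item~4: reducing the event by the a.s.\ identity $\min(W_1,W_2)=0$, setting up the iterated integral of $\partial^2\bar F/\partial x\,\partial y$ over the wedge $\{y\ge u,\ x\ge y+w_1\}$ with the correct vanishing of the boundary term as $x\to\infty$, and performing the change of variables so that $h(0)=0$ clears the lower limit. Everything else is either set-theoretic, a specialization, or a routine continuity-from-below limit.
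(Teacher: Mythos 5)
Your proposal is correct and follows essentially the same route as the paper: the core is the same wedge integral of $\partial^2\Bar{F}/\partial x\,\partial y$ over $\{y\geq u,\ x-y\geq w_1\}$ (you merely swap the order of integration), together with $\Bar{F}(u,u)=h(e^{-\lambda u})$ for the law of $U$ and the singular-mass formula (\ref{eq prop2}) for $N=0$. The only organizational difference is that you prove item 4 first and recover items 2 and 3 by specialization and a continuity-from-below limit $w_1\downarrow 0$, whereas the paper computes $P(X>Y\geq u)$ by a separate direct integral; the computations agree.
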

\begin{proof}
    $\textit{1.}$ holds true for the same reasons given in Ghurye and Marshall (1984).\\
    For $\textit{2.}$, since $\Bar{F}_{X,Y}$ satisfies pseudo weak lack-of-memory property,
    $$\Bar{F}_U(u) = P(X \geq u, Y \geq u) = \Bar{F}_{X,Y}(u,u) = h(e^{-\lambda u}), u \geq 0.$$
    Regarding $\textit{3.}$, let us start with the case in which $N = 1$: then, by (\ref{PWBLMP_eq}),
    \begin{equation}
        \begin{split}
            P(N = 1, U \geq u)  = & P(X > Y \geq u)  = \int_{u}^{\infty} \int_{u}^{x} \frac{\partial^2 \bar{F}_{X,Y}}{\partial x \partial y} \ dy \ dx = \\
            & = \int_{u}^{\infty} -h^{'}(e^{-\lambda y} \Bar{G}_1(x-y)) e^{-\lambda y} 
            g_1(x-y) \big{|}_{y=u}^{x} \ dx= 
            h(e^{-\lambda u}) \left(1-\frac{g_1(0)}{\lambda}\right).
            \label{X > Y > u}
        \end{split}
    \end{equation}
    Using equation (\ref{condition1}),
    $P(N = 1) =  \left(1-\frac{g_1(0)}{\lambda}\right),$
    so (\ref{X > Y > u}) rewrites  
    $$P(N = 1, U \geq u) = P(U \geq u) \ P(N = 1).$$
    Similar results hold for $N = -1$.
    In the case in which $N = 0$,
    $$P(U \geq u, N = 0) = P(X = Y \geq u) = h(e^{-\lambda u}) \left(\frac{g_1(0)+g_2(0)}{\lambda}-1\right),$$ thanks to (\ref{eq prop2}); the conclusion follows taking into account that $P(N = 0) = \left(\frac{g_1(0)+g_2(0)}{\lambda}-1\right)$ by  (\ref{condition3}). \\
    As for $\textit{4.}$, it is trivial to show that $\Bar{F}_{(U,W_1,W_2)}(u,w_1,w_2) = h(e^{-\lambda u})$ if $w_1,w_2 \leq 0$ and that 
    $\Bar{F}_{(U,W_1,W_2)}(u,w_1,w_2) = 0$ if $w_1,w_2 > 0$: for this reason, we focus on the cases in which $w_1 > 0$ and $w_2 \leq 0$ and $w_1 \leq 0$ and $w_2 > 0$.\\
    For $w_1 > 0$ and $w_2 \leq 0$, we have that:
    \begin{equation}
        \begin{split}
               \Bar{F}_{(U,W_1,W_2)}(u,w_1,w_2) & =  P(X \geq u, Y \geq u, X - Y \geq w_1) = P( X \geq w_1 + u, u \leq Y \leq X - w_1) = \\
              & = \int_{u+w_1}^{\infty} \int_{u}^{x-w_1} \frac{\partial^2  \Bar{F}}{\partial x \partial y} dy \ dx  = \int_{u+w_1}^{\infty} \left(-h^{'}(e^{-\lambda y} \Bar{G}_1(x-y)) g_1(x-y) e^{-\lambda y} \big{|}_{y=u}^{x-w_1} \right) dx = \\
             & = h(e^{-\lambda u} \Bar{G}_1(w_1)) \left(1-\frac{g_1(w_1)}{\lambda \Bar{G}_1(w_1)}\right)  = 
             \exp_h(\lambda u) \otimes \Bar{F}_1(w_1) \left(1-\frac{r_1(w_1)}{\lambda}\right).
             \label{U W}
        \end{split}
    \end{equation}
   \\
    Similarly, if $w_2 > 0$ and $w_1 \leq 0$, we have 
    \begin{equation}
     \Bar{F}_{(U,W_1,W_2)}(u,w_1,w_2)   
     = \exp_h(\lambda u) \otimes \Bar{F}_2(w_2) \left(1-\frac{r_2(w_2)}{\lambda}\right).
    \end{equation}
\end{proof}
In Theorem 8.1 in Block and Basu (1974), the following alternative characterization of the vector $(X,Y)$ possessing standard weak lack-of-memory property, in the non-singular case, is given.
\begin{theorem}
    Let (X, Y) have a non-negative bivariate
 distribution which is absolutely continuous with marginal cumulative distribution functions $F_1$ and $F_2$. Then $(X,Y)$ satisfies standard weak lack-of-memory property if and only if there exist random variables $U = \min(X, Y)$ and
 $V = X - Y$ such that
 \begin{enumerate}
     \item $U$ and $V$ are independent
 \item $U$ has an exponential distribution with parameter $\theta > 0$;
 \item \begin{equation*}
     P(V < t) = \begin{cases}
         F_1(t)+\frac{f_1(t)}{\theta}, \ t \geq 0 \\
         1-F_2(-t)-\frac{f_2(-t)}{\theta}, t < 0
     \end{cases}.
 \end{equation*}
 \end{enumerate}
\end{theorem}

In the next Proposition, we extend their characterization to bivariate survival distributions satisfying the pseudo weak lack-of-memory property without assuming any conditions on the singularity.
\begin{proposition}
    Let $F_{X,Y}$ be the bivariate survival function of the vector $(X,Y)$ with marginal survival functions $\bar{F}_i = h(\Bar{G}_i)$, $i = 1,2$.
    Then $\Bar{F}_{X,Y}$ satisfies pseudo weak lack-of-memory property if and only if
        \begin{equation}
          P(U \geq u, V \geq v) = \begin{cases}
             \exp_h(\lambda u) +  h(e^{-\lambda u} \Bar{G}_2(-v)) \left(\frac{ g_2(-v)}{\lambda \Bar{G}_2(-v)}-1\right), u \geq 0, v \leq 0 \\
             h(e^{-\lambda u} \Bar{G}_1(v)) \left(1-\frac{g_1(v)}{\lambda \Bar{G}_1(v)}\right), u \geq 0, v > 0
        \end{cases},
            \label{Surv_VU}
        \end{equation}
        where $U = \min(X,Y)$ and $V = X-Y$.
\end{proposition}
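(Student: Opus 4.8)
The plan is to move back and forth between $(X,Y)$ and $(U,V)$ via the change of variables $\Phi\colon(x,y)\mapsto(\min(x,y),\,x-y)$. This is a bicontinuous bijection of $\{(x,y):x,y\ge 0\}$ onto $\{(u,v):u\ge 0,\ v\in\mathbf{R}\}$, with inverse $\Phi^{-1}(u,v)=(u+v^{+},\,u+v^{-})$, where $v^{+}=\max(v,0)$ and $v^{-}=\max(-v,0)$; concretely $U=\min(X,Y)$, $V=X-Y$, and $X=U+V^{+}$, $Y=U+V^{-}$. Hence the law of $(X,Y)$ and the law of $(U,V)$ determine each other, so it suffices to match joint survival functions.

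For the ``only if'' implication I would start from the representation recalled in (\ref{PWBLMP_eq}): a survival function enjoying the pseudo weak lack-of-memory property equals $h\bigl(e^{-\lambda y}\bar G_1(x-y)\bigr)$ for $x\ge y$ and $h\bigl(e^{-\lambda x}\bar G_2(y-x)\bigr)$ for $x<y$. Then I would compute $P(U\ge u,\,V\ge v)$ by cases on the sign of $v$. For $v>0$ one has $\{U\ge u,\,V\ge v\}=\{Y\ge u,\ X\ge Y+v\}$, a region lying strictly above the diagonal, and integrating $\partial^2\bar F/\partial x\,\partial y$ over it is exactly the computation carried out in (\ref{U W}) with $w_1=v$; it returns $h\bigl(e^{-\lambda u}\bar G_1(v)\bigr)\bigl(1-\tfrac{g_1(v)}{\lambda\bar G_1(v)}\bigr)$. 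For $v\le 0$ I would instead write $P(U\ge u,\,V\ge v)=\bar F_{X,Y}(u,u)-P\bigl(U\ge u,\ Y-X>-v\bigr)$; the subtracted event is again off the diagonal, its probability equals $h\bigl(e^{-\lambda u}\bar G_2(-v)\bigr)\bigl(1-\tfrac{g_2(-v)}{\lambda\bar G_2(-v)}\bigr)$ by the mirror image of (\ref{U W}), and $\bar F_{X,Y}(u,u)=\exp_h(\lambda u)$. Combining the two cases reproduces (\ref{Surv_VU}).

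For the converse I would argue by uniqueness rather than inverting the integral transforms directly. Suppose $(U,V)$ has survival function (\ref{Surv_VU}) for the given data $h,\lambda,\bar G_1,\bar G_2$. Let $\tilde F$ be the function (\ref{PWBLMP_eq}) built from the very same $h,\lambda,\bar G_1,\bar G_2$; under the standing assumptions of Proposition \ref{prop1} it is a genuine bivariate survival function, and it satisfies the pseudo weak lack-of-memory property since (\ref{PWBLMP_eq}) is the general solution of the functional equation of Definition \ref{PWBLMP_def}. By the ``only if'' computation just performed, the pair $(\tilde U,\tilde V)=\Phi(\tilde X,\tilde Y)$ associated with $\tilde F$ has survival function exactly (\ref{Surv_VU}), hence $(U,V)\stackrel{d}{=}(\tilde U,\tilde V)$. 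Applying the measurable map $\Phi^{-1}$ yields $(X,Y)=\Phi^{-1}(U,V)\stackrel{d}{=}\Phi^{-1}(\tilde U,\tilde V)=(\tilde X,\tilde Y)$, so $\bar F_{X,Y}=\tilde F$ and therefore $\bar F_{X,Y}$ satisfies the pseudo weak lack-of-memory property.

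The step demanding most care is the $v\le 0$ branch of the forward computation: the region $\{X-Y\ge v\}$ then contains the diagonal $\{X=Y\}$, which carries the positive singular mass identified in Proposition \ref{prop1}, so integrating the absolutely continuous part alone would miss that mass. Rewriting the probability as $\bar F_{X,Y}(u,u)$ minus the probability of the open region $\{Y-X>-v,\ U\ge u\}$ sidesteps the issue, after which the evaluation is just the reflection of (\ref{U W}). Everything else is routine manipulation of (\ref{PWBLMP_eq}) together with the elementary properties of $\Phi$.
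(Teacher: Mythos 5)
Your proof is correct, and the forward direction is essentially the paper's: for $v>0$ you quote the computation (\ref{U W}) with $w_1=v$, exactly as the paper does, and for $v\le 0$ your complement trick $P(U\ge u,V\ge v)=\bar F_{X,Y}(u,u)-P(U\ge u,\,Y-X>-v)$ is just a repackaging of the paper's three-term decomposition $P(u\le Y<X)+P(X=Y\ge u)+P(X\ge u,\,X<Y\le X-v)$; both hinge on (\ref{U W}), its mirror image, and the diagonal mass from (\ref{eq prop2}), and you are right that the diagonal must be handled on the closed side of the event. Where you genuinely depart from the paper is the converse. The paper inverts the transform by brute force: it writes $\{X>x,Y>y\}$ (for $0<x\le y$) as a four-piece event in $(U,V)$, evaluates each piece by differentiating and integrating (\ref{Surv_VU}), and checks the sum collapses to $h\bigl(e^{-\lambda x}\bar G_2(y-x)\bigr)$, i.e.\ to the form (\ref{PWBLMP_eq}). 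You instead argue by identification through the bijection $\Phi$: build the canonical model $\tilde F$ from the same data, apply the already-proved forward direction to it, match the $(U,V)$ laws, and push back through $\Phi^{-1}$. This buys a much shorter converse with no integrations, but it costs an extra hypothesis the paper's computation does not need: you must know beforehand that $\tilde F$ constructed from $h,\lambda,\bar G_1,\bar G_2$ is a bona fide bivariate survival function, so that a vector $(\tilde X,\tilde Y)$ exists to which the forward direction applies. You cover this by invoking the standing assumptions of Proposition \ref{prop1}, which is consistent with the paper's declared framework, but note that the paper's direct reconstruction of $\bar F_{X,Y}$ from (\ref{Surv_VU}) establishes this fact as a by-product rather than assuming it, so it is the more self-contained of the two arguments.
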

\begin{proof}
    Let us start with the case $v < 0$. We have:
    \begin{equation*}
         P( U \geq u, V \geq v) 
         = P(u \leq Y < X) + P(X = Y \geq u) + P(X \geq u, X < Y \leq X-v).
    \end{equation*}
    The first probability is already known from equation (\ref{X > Y > u}), while the second one is given by (\ref{eq prop2}), so we need to compute only the last one.
    We get
    \begin{equation*}
    \begin{split}
          P(X \geq u, X < Y \leq X-v)   & = \int_{u}^{\infty} \int_{x}^{x-v} \frac{\partial^2 \bar{F}_{X,Y}(x,y)}{\partial x \partial y} \ dy \ dx = \\
        & = \int_{u}^{\infty} h^{'}(e^{-\lambda x} \Bar{G}_2(y-x)) (-\lambda e^{-\lambda x} \bar{G}_2(y-x) +  e^{-\lambda x} g_2(y-x)) \big{|}_{y = x}^{x-v}  \ dx  = \\
        & = h(e^{-\lambda u} \Bar{G}_2(-v)) \left(\frac{ g_2(-v)}{\lambda \Bar{G}_2(-v)}-1\right) + h(e^{-\lambda u}) \left(1-\frac{ g_2(0)}{\lambda}\right).
    \end{split}
    \end{equation*}
Overall,
\begin{equation*}
    \begin{split}
    & P(U \geq u, V \geq v) =  h(e^{-\lambda u}) +  h(e^{-\lambda u} \Bar{G}_2(-v)) \left(\frac{ g_2(-v)}{\lambda \Bar{G}_2(-v)}-1\right).
    \end{split}
\end{equation*}
If $v > 0$, by equation (\ref{U W}),
\begin{equation*}
    P(U \geq u, V \geq v) =
    h(e^{-\lambda u} \Bar{G}_1(v)) \left(1-\frac{g_1(v)}{\lambda \Bar{G}_1(v)}\right).
\end{equation*}
Conversely, let us suppose that (\ref{Surv_VU}) holds true.
If $0 \leq x \leq y$, we have that
\begin{equation*}
    \begin{split}
         & \Bar{F}_{X,Y}(x,y)  = \\
         & = P(X > x, Y > y, X \geq Y) + 
        P( X > x, Y > y, X < Y) = \\
        & = P(U + V > x, U > y, V \geq 0) + P(U > x, U -V > y, V < 0) = \\
        & = P(U > y, V \geq 0) + P(U > x, U -V > y, V < 0) = 
        \\
        & = P(U > y, V \geq 0) + P( x < U < y, \ x-y < V < U-y) + P(U > y, x-y < V < 0) + P(U > x, V \leq x-y).
    \end{split}
\end{equation*}
From (\ref{Surv_VU}), $P(U \geq y, V \geq 0) = h(e^{-\lambda u}) \frac{g_2(0)}{\lambda}$;
moreover,
\begin{equation*}
    \begin{split}
     P(U > y, x-y < V < 0)  & = \int_{y}^{\infty}  -\lambda e^{-\lambda u} h^{'}(e^{-\lambda u} \Bar{G}_2(-v)) \ \Bar{G}_2(-v) 
    \left(\frac{g_2(-v)}{\lambda \Bar{G}_2(-v)}-1\right) \big{|}_{v = x-y}^0 \ du = \\
    & = h(e^{-\lambda y} \Bar{G}_2(y-x)) \left(\frac{g_2(y-x)}{\lambda \Bar{G}_2(y-x)}-1\right)-h(e^{-\lambda y}) \left(\frac{g_2(0)}{\lambda}-1\right);
    \end{split}
\end{equation*}
similarly, 
\begin{equation*}
\begin{split}
 & P(x < U < y , \ x-y < V < U-y)  = \\
& = h(e^{-\lambda x} \Bar{G}_2(y-x)) \frac{g_2(y-x)}{\lambda \Bar{G}_2(y-x)} - h(e^{-\lambda y})  -h(e^{-\lambda y} \Bar{G}_2(y-x)) \left(\frac{g_2(y-x)}{\lambda \Bar{G}_2(y-x)}-1\right).
\end{split}
\end{equation*}
Finally, 
\begin{equation*}
    \begin{split}
        & P(U > x, V \leq x-y) =
         h(e^{-\lambda x} \Bar{G}_2(y-x)) \left(1-\frac{g_2(y-x)}{\lambda \bar{G}_2(y-x)}\right).
    \end{split}
\end{equation*}
Summing up all the probabilities above, we get that 
$$\Bar{F}_{X,Y}(x,y) = h(e^{-\lambda x} \Bar{G}_2(y-x)), \ 0 \leq x \leq y;$$ similar results hold for $x > y$.
\end{proof}
\newpage
\section{Dependence Structure of Pseudo Weak Distribution}
In this section, we analyse the dependence structure of distributions satisfying (\ref{pseudo weak equation}), showing the impact that the generator $h$ has on the Kendall distribution function and on tail dependence coefficients.
\subsection{Kendall Distribution Function}
We recall that the Kendall Distribution function of a random vector $(X,Y)$ with survival distribution $\bar{F}_{X,Y}$ is defined as
$$K(t) = P(\bar{F}_{X,Y}(X,Y) \leq t), \ t \in [0,1],$$
see, among the wide literature, Genest and Rivest (2001) and Joe (2014). \\
As proved in next Proposition, the Kendall Distribution Function associated to the survival function of type (\ref{PWBLMP_eq}) can be expressed in terms of $\bar{G}_i, i = 1,2$ and in terms of $h$.
\begin{proposition}
Let
$K$
be the Kendall distribution function associated to the survival function (\ref{PWBLMP_eq}).
Let us assume that $h$ is differentiable and that $\bar G_i=h^{-1}\left (\bar F_i\right )$ admits a density $g_i$ for $i=1,2$. Then
\begin{equation}
K(s)=
s-H(h^{-1}(s))
\label{Kendall_equation}
\end{equation}
where
$$H(v)=h^\prime\left (v\right)v\left[2\ln\left(v\right)+\frac 1\lambda \left(J_1(v)+J_2(v)\right)\right],$$
with
\begin{equation}
J_i(v)=\int_0^{\bar G_i^{-1}\left (v\right )}\frac{g_i^2(z)}{\bar G_i^2(z)}dz,\,i=1,2.
\label{J}
\end{equation}
\label{Kendall_prop}
\end{proposition}
\begin{proof}
Since $\bar F_{X,Y}(x,y)=h\left (\bar G_{X,Y}(x,y\right ))$, we have
$$\mathbb P\left (\bar F_{X,Y}(X,Y)\leq s\right )=\mathbb P\left (\bar G_{X,Y}(X,Y)\leq h^{-1}(s)\right ).$$
Let $z_s=-\frac 1\lambda\ln \left(h^{-1}(s)\right)$ be the solution of $\bar G_{X,Y}(x,x)=h^{-1}(s)$ and let $D_1$ and $D_2$ be defined as
$$D_1=\left\{(x,y):0\leq y\leq z_s,\,z_s\leq x\leq y+\bar G_1^{-1}\left (h^{-1}(s)e^{\lambda y}\right )\right\}$$
and
$$D_2=\left\{(x,y):0\leq x\leq z_s,\,z_s\leq y\leq x+\bar G_2^{-1}\left (h^{-1}(s)e^{\lambda y}\right )\right\}.$$
Then
$$\begin{aligned}K(s)&=\bar F_1(z_s)+\bar F_2(z_s)-\mathbb P\left( (X,Y)\in D_1\right )-\mathbb P\left( (X,Y)\in D_2\right )-\bar F_{X,Y}(z_s,z_s)=\\
&=\bar F_1(z_s)+\bar F_2(z_s)-\mathbb P\left( (X,Y)\in D_1\right )-\mathbb P\left( (X,Y)\in D_2\right )-s
.\end{aligned}$$
Noticing that
$$\mathbb P(X>x\vert Y=y)\ f_2(y)=-\frac{\partial \bar F_{X,Y}(x,y)}{\partial y}=-h^\prime (\bar G_{X,Y}(x,y))\frac{\partial \bar G_{X,Y}(x,y)}{\partial y}$$ and evaluating it in 
$x=y+\bar G_1^{-1}\left (h^{-1}(s)e^{\lambda y}\right )$,
we have
$$\begin{aligned}
\mathbb P((X,Y)\in D_1)&=\\
&= \int_0^{z_s}\left [\mathbb P\left (X>z_s\vert Y=y\right )-
\mathbb P\left (X>y+\bar G_1^{-1}\left (h^{-1}(s)e^{\lambda y}\right )\vert Y=y\right )\right ]f_2(y)\,dy=\\
&=\int_0^{z_s} \left(
h^\prime(h^{-1}(s))\left [-\lambda h^{-1}(s)+e^{-\lambda y}g_1\left (\bar G_1^{-1}\left (h^{-1}(s)e^{\lambda y}\right )\right )\right] + \frac{\partial \bar F_{X,Y}(z_s,y)}{\partial y} \right) dy = \\ & = -\bar F_{X,Y}(z_s,z_s)+\bar F_1(z_s)
-\lambda h^\prime(h^{-1}(s))h^{-1}(s)z_s + h^\prime(h^{-1}(s))
\int_0^{z_s}e^{-\lambda y}g_1\left (\bar G_1^{-1}\left (h^{-1}(s)e^{\lambda y}\right )\right )\,dy=\\
&=-s+\bar F_1(z_s)
+ h^\prime(h^{-1}(s)) h^{-1}(s)\ln\left (h^{-1}(s)\right )  +\frac 1\lambda h^\prime(h^{-1}(s))h^{-1}(s)
\int_0^{\bar G_1^{-1}\left (h^{-1}(s)\right )}\frac{g_i^2(z)}{\bar G_1^2(z)}
\,dy
\end{aligned},$$
where, in the last integral, we have substituted $z=\bar G_1^{-1}\left (h^{-1}(s)e^{\lambda y}\right )$.

The probability $\mathbb P((X,Y)\in D_2)$ can be obtained similarly.
\end{proof}
\vskip 0.2 cm
Since the standard weak lack-of-memory property can be obtained from the pseudo one when $h = id$, Proposition \ref{Kendall_prop} allows to recover the Kendall Distribution Function also in the standard setting, id est
\begin{equation}
    K(t) = t \left(1-2\log(t) - \frac{J_1(t)+J_2(t)}{\lambda}\right),
    \label{K_id}
\end{equation}
where $J_i, \ i = 1,2$ are given by (\ref{J}). \\
We now give some examples in which the expressions for $J_i, \ i = 1,2$ can be determined explicitly.
\begin{example}
    If $\Bar{G}_i(x) = e^{-\alpha_i x}, \ i = 1,2$, then $J_i(x) = -\alpha_i \log(x), \ i = 1,2$ and 
    $$K(t) = t-h^{'}(h^{-1}(t)) h^{-1}(t) \left[ 2 \log(h^{-1}(t)) - \frac{\alpha_1+\alpha_2}{\lambda} \log(h^{-1}(t))\right].$$ If $h(x) = x$, with $\max(\alpha_1,\alpha_2)  \leq \lambda \leq \alpha_1+\alpha_2$, we get the well-known Marshall-Olkin distribution, see Marshall and Olkin (1967). 
    Using (\ref{K_id}), we recover the well-known expression for the Kendall distribution function of the Marshall-Olkin distribution:
    \begin{equation*}
        \begin{split}
            & K(t) = t \left(1-\log(t) \left(2-\frac{\alpha_1+\alpha_2}{\lambda}\right)\right).
        \end{split}
    \end{equation*}
    If $h(x) = e^{-\gamma(x^{-1}-1)}$, with $\max(\alpha_1,\alpha_2)  \leq \lambda \leq \alpha_1+\alpha_2$ and $\gamma \geq 1$, from (\ref{PWBLMP_eq}) we recover
    \begin{equation}
        \bar{F}(x,y) = \begin{cases}
            e^{-\gamma \left[\left(
            1+\frac{\alpha_1(x-y)}{\gamma}\right) e^{\lambda y}-1\right]}, \ x \geq y \\
            e^{-\gamma \left[\left(
            1+\frac{\alpha_2(y-x)}{\gamma}\right) e^{\lambda x}-1\right]}, \ x < y \\
        \end{cases}.
        \label{F1}
    \end{equation}
    The Kendall Distribution Function associated to the latter is given by
    \begin{equation*}
        \begin{split}
            & K(t) = t \left(1-(\gamma-\log(t)) \left(\frac{\alpha_1+\alpha_2-2\lambda}{\lambda} \log\left(1-\frac{\log(t)}{\gamma}\right)\right)\right).
        \end{split}
    \end{equation*}
    Plot of the Kendall distribution function for different values of $\gamma$ are given in Figure \ref{Figure13}, with $\alpha_1 = 2$, $\alpha_2 = 3$ and $\lambda = 4.5$: we can see that distribution (\ref{F1}) can be used to model positive and negative dependence.
 \newline
    If $h(x) = \frac{e^{\gamma x}-1}{e^{\gamma}-1}$, with $\max(\alpha_1,\alpha_2)  \leq \lambda \leq \alpha_1+\alpha_2$ and $\gamma > 0$, from (\ref{PWBLMP_eq}) we recover
    \begin{equation}
        \bar{F}(x,y) = \begin{cases}
            \frac{\exp(\gamma e^{-\lambda y -\alpha_1(x-y)})-1}{e^{\gamma}-1}, \ x \geq y \\
            \frac{\exp(\gamma e^{-\lambda y -\alpha_2(y-x)})-1}{e^{\gamma}-1}, \ x < y \\
        \end{cases}.
        \label{F13}
    \end{equation}
    The Kendall Distribution Function associated to the latter is given by
    \begin{equation*}
        \begin{split}
            & K(t) = t+(1+t(e^\gamma-1)) \log(1+t(e^\gamma-1)) \frac{\alpha_1+\alpha_2-2\lambda} {\lambda (e^\gamma-1)} \left(\log(\log(1+t(e^\gamma-1))-\log(\gamma)\right)
        \end{split}
    \end{equation*}
    Standard lack-of-memory property can be recovered when $\gamma \rightarrow 0^{+}$.
    Plot of the Kendall distribution function for different values of $\gamma$ are given in Figure \ref{Figure15}, with $\alpha_1 = 2$, $\alpha_2 = 3$ and $\lambda = 4.5$: we can see that dependence is positive and it increases as $\gamma$ increases.
 \newline
\end{example}

\begin{example}
    If $\Bar{G}_i(x) = (1+x)^{-\alpha_i}, \ i = 1,2$, then $J_i(x) = \alpha_i^2 (1-x^{\frac{1}{\alpha_i}}), \ i = 1,2$ and 
    \begin{equation}
    K(t) = t-h^{'}(h^{-1}(t)) h^{-1}(t) \left[ 2 \log(h^{-1}(t)) + \frac{\alpha_1^2(1-(h^{-1}(t))^{\frac{1}{\alpha_1}})+\alpha_2^2(1-(h^{-1}(t))^{\frac{1}{\alpha_2}})}{\lambda} \right].
    \label{K_Pareto}
    \end{equation}
    If $h(x) = x$, with $\max(\alpha_1,\alpha_2) +1 \leq \lambda \leq \alpha_1+\alpha_2$, (\ref{PWBLMP_eq}) becomes
    \begin{equation*}
        \bar{F}(x,y) = \begin{cases}
            e^{-\lambda y} (1+x-y)^{-\alpha_1}, x \geq y \\
            e^{-\lambda x} (1+y-x)^{-\alpha_2}, x < y \\
        \end{cases}.
    \end{equation*}
    The Kendall Distribution Function associated to the latter is given by
    \begin{equation*}
        \begin{split}
            & K(t) = t \left(1-2 \log(t) - \frac{\alpha_1^2(1-t^{\frac{1}{\alpha_1}})+ \alpha_2^2 (1-t^{\frac{1}{\alpha_2}})}{\lambda}\right).
        \end{split}
    \end{equation*}
    If $h(x) = e^{-\gamma(x^{-1}-1)}$, with $\max(\alpha_1,\alpha_2) +1  \leq \lambda \leq \alpha_1+\alpha_2$ and $\gamma \geq 2$, from (\ref{PWBLMP_eq}) we recover
    \begin{equation}
    \Bar{F}(x,y) = 
    \begin{cases}
        e^{-\gamma \left[\left(1+\frac{\alpha_1}{\gamma} \log(1+x-y\right) e^{\lambda y}-1\right]}, \ x \geq y \\
        e^{-\gamma \left[\left(1+\frac{\alpha_2}{\gamma} \log(1+y-x\right) e^{\lambda x}-1\right]}, \ x < y \\
    \end{cases}.
    \label{F2}
    \end{equation}
    The Kendall Distribution Function associated to the latter is given by
    \begin{equation*}
        \begin{split}
            & K(t) = t\cdot\left(1-\left({\gamma}-\ln\left(t\right)\right)\left(\dfrac{{\alpha}_1^2\cdot\left(1-\left(1-\frac{\ln\left(t\right)}{{\gamma}}\right)^{-\frac{1}{{\alpha}_1}}\right)}{{\lambda}}+\dfrac{{\alpha}_2^2\cdot\left(1-\left(1-\frac{\ln\left(t\right)}{{\gamma}}\right)^{-\frac{1}{{\alpha}_2}}\right)}{{\lambda}}-2\ln\left(1-\dfrac{\ln\left(t\right)}{{\gamma}}\right)\right)\right)
        \end{split}.
    \end{equation*}
In Figure \ref{Figure 52b}, we plot the Kendall distribution function for different values of $\gamma$, with $\alpha_1 = 10$, $\alpha_2 = 18$, $\lambda = 25$: we can see that dependence slightly increases as $\gamma$ increases.
  
\end{example}
\begin{example}
If $\bar{G}_i(x) = (1+\xi_i(e^{\beta_i x}-1))^{-1}, \ i = 1,2$, then $J_i(x) = (\xi_i-1) \beta_i (1-x)-\beta_i \log(x)$ and 
\begin{equation*}
\begin{split}
& K(t) = t-h^{'}(h^{-1}(t)) h^{-1}(t) \cdot \\
& \cdot \left[ 2 \log(h^{-1}(t)) + \frac{(\xi_1-1) \beta_1 (1-h^{-1}(t))-\beta_1 \log(h^{-1}(t))+(\xi_2-1) \beta_2 (1-h^{-1}(t))-\beta_2 \log(h^{-1}(t))}{\lambda} \right]. 
\end{split}
\end{equation*}
If $h(x) = x$, with $\max(\xi_1,\xi_2) \leq 1$ and $\max(\beta_1,\beta_2) \leq \lambda \leq \xi_1 \beta_1+ \xi_2 \beta_2$, (\ref{PWBLMP_eq}) becomes 
\begin{equation}
\bar{F}(x,y) = 
\begin{cases}
    e^{-\lambda y} (1+\xi_1(e^{\beta_1 (x-y)}-1))^{-1}, \ x \geq y \\
    e^{-\lambda x} (1+\xi_2(e^{\beta_2 (y-x)}-1))^{-1}, \ x < y \\
\end{cases}.
\label{F53}
\end{equation}
    The Kendall Distribution Function associated to the latter is given by
    \begin{equation*}
             K(t) 
             = t\left(1+\frac{\log(t)}{\lambda}(\beta_1+\beta_2-2\lambda) + \frac{1-t}{ \lambda} (\beta_1(1-\xi_1)+\beta_2(1-\xi_2))\right).
    \end{equation*}
If $h(x) = e^{-\gamma(x^{-1}-1)}$, with $\lambda \geq \max(\beta_1,\beta_2)$, 
$\beta_1 \ (1-\gamma \xi_1) \geq \lambda \ (1-\gamma)$, 
$\beta_2 \ (1-\gamma \xi_2) \geq  \lambda\ (1-\gamma)$ and 
$\beta_1 \xi_1+\beta_2 \xi_2-\lambda \geq 0$, we get the same distribution obtained in Marshall and Olkin (2015). Then it is possible to show that
    \begin{equation*}
    \begin{split}
     K(t) 
     = t \left(1-\frac{1}{\lambda }\left(\frac{\ln\left(t\right)\left({\beta}_2\cdot\left({\xi}_2-1\right)+{\beta}_1\cdot\left({\xi}_1-1\right)\right)}{\ln\left(t\right)-{\gamma}}+\left({\beta}_2+{\beta}_1-2\lambda\right)\ln\left(1-\frac{\ln\left(t\right)}{{\gamma}}\right)\right)\left({\gamma}-\ln\left(t\right)\right)\right)
    \end{split}.
    \end{equation*}
\end{example}
\subsection{Upper and Lower Tail Dependence Coefficients}
Given a random vector $(X,Y)$ with copula $C$ and marginal cumulative distribution functions $F_X$ and $F_Y$, we recall that the upper tail dependence coefficient $\lambda_U$ is
\begin{equation*}
    \lambda_U = \lim_{u \rightarrow 1^{-}} P[F_X(X) > u | \ F_Y(Y) > u ] = \lim_{u \rightarrow 1^{-}} \frac{1-2u+C(u,u)}{1-u};
\end{equation*}
analogously, the lower tail dependence coefficient $\lambda_L$ is given by
\begin{equation*}
    \lambda_L = \lim_{u \rightarrow 0^{+}} P[F_X(X) < u | \ F_Y(Y) < u ] = \lim_{u \rightarrow 0^{+}} \frac{C(u,u)}{u}.
\end{equation*}
Thanks to (\ref{distortion copula}), the survival copula associated to the survival distribution satisfying pseudo weak lack-of-memory property is $C^{F}(u,v) = h(C^{G}(h^{-1}(u),h^{-1}(v))$.
The following Propositions in Durante et al. (2010) applies to this case.
\begin{proposition}
    Let $C$ be a copula with finite lower tail dependence coefficient $\lambda_L(C)$. Moreover, let $\psi$ be an isomorphism of $[0,1]$ such that 
    $C_{\psi}(u,v) = 
    \psi \{C[\psi^{-1}(u),\psi^{-1}(v)]\}$ is again a copula. Then, if
    \begin{equation*}
        \lim_{t \rightarrow 0^{+}} \frac{\psi(t)}{t^{\alpha}} = b \in (0, + \infty)
    \end{equation*}
    for some $\alpha > 0$, then $\lambda_L(C_{\psi}) = (\lambda_L(C))^{\alpha}$.
    \label{lambda_L_prop}
\end{proposition}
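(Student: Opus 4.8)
The plan is to work directly from the copula characterization of the lower tail dependence coefficient, $\lambda_L(C)=\lim_{u\to 0^{+}}C(u,u)/u$, and to reduce the whole statement to an asymptotic comparison of $\psi(C(t,t))$ with $\psi(t)$ as $t\to 0^{+}$. First I would write
\[
\lambda_L(C_{\psi})=\lim_{u\to 0^{+}}\frac{C_{\psi}(u,u)}{u}=\lim_{u\to 0^{+}}\frac{\psi\bigl(C(\psi^{-1}(u),\psi^{-1}(u))\bigr)}{u},
\]
and perform the substitution $t=\psi^{-1}(u)$, i.e. $u=\psi(t)$. Since $\psi$ is an isomorphism of $[0,1]$ it is continuous, strictly increasing, and fixes $0$ and $1$; hence $u\to 0^{+}$ if and only if $t\to 0^{+}$, and the quantity to study becomes $\lim_{t\to 0^{+}}\psi(C(t,t))/\psi(t)$. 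Moreover $C$ is continuous with $C(0,0)=0$, so $C(t,t)\to 0^{+}$ and $\psi(C(t,t))\to 0$.

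The only quantitative input from the hypothesis is extracted next: from $\psi(t)/t^{\alpha}\to b\in(0,\infty)$ one gets, for every fixed $c>0$,
\[
\frac{\psi(ct)}{\psi(t)}=\frac{\psi(ct)}{(ct)^{\alpha}}\cdot\frac{t^{\alpha}}{\psi(t)}\cdot c^{\alpha}\;\xrightarrow[t\to 0^{+}]{}\;b\cdot b^{-1}\cdot c^{\alpha}=c^{\alpha},
\]
using $ct\to 0^{+}$. I would then close the argument by a squeeze. Writing $\lambda_L:=\lambda_L(C)$, for any $\varepsilon>0$ there is $\delta>0$ with $(\lambda_L-\varepsilon)\,t\le C(t,t)\le(\lambda_L+\varepsilon)\,t$ for $0<t<\delta$ (when $\lambda_L-\varepsilon\le 0$ the left inequality is replaced by the trivial $0\le C(t,t)$). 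Applying the increasing map $\psi$ and dividing by $\psi(t)>0$,
\[
\frac{\psi\bigl((\lambda_L-\varepsilon)\,t\bigr)}{\psi(t)}\;\le\;\frac{\psi(C(t,t))}{\psi(t)}\;\le\;\frac{\psi\bigl((\lambda_L+\varepsilon)\,t\bigr)}{\psi(t)} ,
\]
with $0$ in place of the left-hand side in the degenerate case. Letting $t\to 0^{+}$ and applying the previous display with $c=\lambda_L\pm\varepsilon$ gives $(\lambda_L-\varepsilon)^{\alpha}\le\liminf\le\limsup\le(\lambda_L+\varepsilon)^{\alpha}$ (or $0\le\liminf$ when $\lambda_L=0$); since $\varepsilon$ is arbitrary and $x\mapsto x^{\alpha}$ is continuous, the limit exists and equals $\lambda_L^{\alpha}$, i.e. $\lambda_L(C_{\psi})=(\lambda_L(C))^{\alpha}$.

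The main obstacle is the degenerate regime $\lambda_L(C)=0$, equivalently $C(t,t)=o(t)$, which includes copulas with $C(t,t)\equiv 0$ near the origin: here the naive trick of multiplying and dividing by $C(t,t)^{\alpha}$ fails, since $\psi(C(t,t))/C(t,t)^{\alpha}$ need not be defined. The squeeze above avoids this precisely because it uses only the monotonicity of $\psi$ together with the one-sided bound $C(t,t)\le\varepsilon t$, which already forces $\limsup_{t\to 0^{+}}\psi(C(t,t))/\psi(t)\le\varepsilon^{\alpha}\to 0$. Beyond that, the remaining care is purely bookkeeping: checking that the isomorphism hypothesis indeed gives $\psi(0)=0$ and $\psi(t)>0$ for $t>0$, and that "finite lower tail dependence coefficient" is understood as the existence of the defining limit, so that the bounds $(\lambda_L\pm\varepsilon)t$ on $C(t,t)$ are available.
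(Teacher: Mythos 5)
Your argument is correct, but note that the paper itself does not prove this statement: it is quoted verbatim from Durante, Foschi and Sarkoci (2010) and used as a black box, so there is no in-paper proof to compare against. Your reduction is the natural one and is carried out carefully: the change of variable $u=\psi(t)$ turns $\lambda_L(C_\psi)$ into $\lim_{t\to 0^+}\psi(C(t,t))/\psi(t)$, the hypothesis $\psi(t)\sim b\,t^{\alpha}$ yields $\psi(ct)/\psi(t)\to c^{\alpha}$ for each fixed $c>0$, and the squeeze via $(\lambda_L-\varepsilon)t\le C(t,t)\le(\lambda_L+\varepsilon)t$ (with the trivial lower bound when $\lambda_L-\varepsilon\le 0$) gives both existence of the limit and its value $\lambda_L^{\alpha}$. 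This is slightly more robust than the shortcut one often sees, namely writing $\psi(C(t,t))/\psi(t)=\bigl[\psi(C(t,t))/C(t,t)^{\alpha}\bigr]\cdot\bigl[t^{\alpha}/\psi(t)\bigr]\cdot\bigl[C(t,t)/t\bigr]^{\alpha}$, which requires $C(t,t)>0$ near $0$ and hence a separate treatment of the case $\lambda_L(C)=0$; your monotonicity-based squeeze handles the degenerate and nondegenerate regimes uniformly, and you correctly identify and close exactly that gap. The only bookkeeping points (for small $t$ the arguments $(\lambda_L\pm\varepsilon)t$ lie in $[0,1]$; $\psi(t)>0$ for $t>0$ since $\psi$ is a strictly increasing bijection fixing $0$) are either stated or immediate, so the proof stands as a complete, self-contained justification of the cited result.
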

\begin{proposition}
    Let $C$ be a copula with finite upper tail dependence coefficient $\lambda_U(C)$. Moreover, let $\psi$ be an isomorphism of $[0,1]$ such that 
    $C_{\psi}(u,v) = 
    \psi \{C[\psi^{-1}(u),\psi^{-1}(v)]\}$ is again a copula. Then, if
    \begin{equation*}
        \lim_{t \rightarrow 1^{-}} \frac{1-\psi(t)}{(1-t)^{\alpha}} = b \in (0, + \infty)
    \end{equation*}
    for some $\alpha > 0$, then $\lambda_U(C_{\psi}) = 2-(2-\lambda_U(C))^{\alpha}$.
    \label{lambda_U_prop}
\end{proposition}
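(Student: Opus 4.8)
The plan is to unwind the definition of the upper tail dependence coefficient through the copula representation
\[
\lambda_U(C_\psi)=\lim_{u\to 1^-}\frac{1-2u+C_\psi(u,u)}{1-u},
\]
and then to substitute $u=\psi(t)$. Since $\psi$ is an order isomorphism of $[0,1]$ (continuous and strictly increasing with $\psi(0)=0$, $\psi(1)=1$), the limit $u\to 1^-$ is equivalent to $t\to 1^-$, while $C_\psi(\psi(t),\psi(t))=\psi(C(t,t))$ straight from the definition of the distorted copula. Thus the quantity to be computed is
\[
\lambda_U(C_\psi)=\lim_{t\to 1^-}\frac{1-2\psi(t)+\psi(C(t,t))}{1-\psi(t)}.
\]

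The next step is purely algebraic: from $1-2\psi(t)+\psi(C(t,t))=2\bigl(1-\psi(t)\bigr)-\bigl(1-\psi(C(t,t))\bigr)$ the ratio equals $2-\dfrac{1-\psi(C(t,t))}{1-\psi(t)}$, so everything reduces to evaluating $\lim_{t\to 1^-}\dfrac{1-\psi(C(t,t))}{1-\psi(t)}$. I would write $1-C(t,t)=(1-t)\,r(t)$, where $r(t)=\dfrac{1-C(t,t)}{1-t}=2-\dfrac{1-2t+C(t,t)}{1-t}\to 2-\lambda_U(C)$ as $t\to 1^-$; moreover $C(t,t)<1$ for $t<1$ and $C(t,t)\to 1^-$ by the Fréchet bounds, and $r(t)$ is bounded and bounded away from $0$ near $t=1$ since $\lambda_U(C)\in[0,1]$. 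Then
\[
\frac{1-\psi(C(t,t))}{1-\psi(t)}=\frac{1-\psi\bigl(1-(1-t)r(t)\bigr)}{\bigl((1-t)r(t)\bigr)^{\alpha}}\cdot\frac{(1-t)^{\alpha}}{1-\psi(t)}\cdot r(t)^{\alpha},
\]
and, by the hypothesis $\lim_{s\to 1^-}\dfrac{1-\psi(s)}{(1-s)^{\alpha}}=b\in(0,\infty)$, the first factor (with moving argument $1-(1-t)r(t)\to 1^-$) tends to $b$, the second to $1/b$, and the third to $(2-\lambda_U(C))^{\alpha}$. Hence the limit is $(2-\lambda_U(C))^{\alpha}$, and plugging back yields $\lambda_U(C_\psi)=2-(2-\lambda_U(C))^{\alpha}$.

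The only step that is not bookkeeping is the simultaneous passage to the limit in the last display, namely applying the regular-variation asymptotics of $\psi$ at the moving point $1-(1-t)r(t)$ while $r(t)$ is only converging rather than constant; this is legitimate precisely because $r$ stays in a compact subinterval of $(0,\infty)$ near $1$, and it is the one place where care is needed. The argument is the mirror image, at the corner $u\to 1^-$, of the one proving Proposition \ref{lambda_L_prop} at $u\to 0^+$, and it requires no stronger input than pointwise convergence in the defining limit of $b$.
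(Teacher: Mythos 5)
Your argument is correct, but note that the paper itself offers no proof to compare against: Propositions \ref{lambda_L_prop} and \ref{lambda_U_prop} are quoted from Durante et al.\ (2010) and used as black boxes. Your proof — substituting $u=\psi(t)$ (legitimate because $\psi$ is a continuous increasing bijection fixing $1$), rewriting the ratio as $2-\frac{1-\psi(C(t,t))}{1-\psi(t)}$, and splitting into three factors via $r(t)=\frac{1-C(t,t)}{1-t}\to 2-\lambda_U(C)$ — is in substance the standard argument for this distortion result, so it supplies exactly the missing justification. One small remark: the step you flag as the only delicate one is in fact automatic. Since $\max(2t-1,0)\le C(t,t)\le t$, you have $C(t,t)<1$ and $C(t,t)\to 1^-$, so your first factor is literally $\frac{1-\psi(s)}{(1-s)^{\alpha}}$ evaluated along $s=C(t,t)\to 1^-$, and plain composition of limits with the hypothesis gives $b$; no uniformity over $r(t)$ in a compact set is needed (indeed $1\le r(t)\le 2$ anyway, by the Fréchet bounds), only $r(t)>0$ and its convergence, which you use for the third factor. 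The hypothesis that $C_{\psi}$ is again a copula enters only to make $\lambda_U(C_{\psi})$ well defined through the stated limit formula, which is how you use it.
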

In the particular case of identical marginal survival functions, that is when the copula $\Bar{C}^G$ is of the form
\begin{equation}
\bar{C^G}(u,v) 
 = e^{-\lambda \bar{G}^{-1}(v)} \bar{G}(\bar{G}^{-1}(u)-\bar{G}^{-1}(v)) 1_{u < v} + 
e^{-\lambda \bar{G}^{-1}(u)} \bar{G}(\bar{G}^{-1}(v)-\bar{G}^{-1}(u)) 1_{u \geq v},
\label{MO copula standard eq}
\end{equation}
the following Proposition holds true.
\begin{proposition}
    Let $(X,Y)$ be a random vector with survival copula $\Bar{C^G}$ of type (\ref{MO copula standard eq}) and let $g(x) = -\Bar{G}^{'}(x), \ x \geq 0$ such that $\lambda \leq 2 g(0)$ and $\frac{\partial \log(g(z))}{\partial z} \geq -\lambda, \ \forall z \geq 0$.
    Then:
    \begin{enumerate}
    \item if $\Bar{G}$ is heavy tailed, id est $\lim_{x \to \infty} \frac{\Bar{G}(x)}{e^{-\theta x}} = + \infty \ \forall \theta > 0$, then $\lambda_L = 0$.
    \item $
    \lambda_U = 2-\frac{\lambda} {g(0)}.
    $ 
\end{enumerate}
    \label{heavy_prop}
\end{proposition}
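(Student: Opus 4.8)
The plan is to reduce both coefficients to one-variable limits by evaluating $\bar C^G$ on the diagonal. Since $\bar G$ is the survival function of a positive random variable, $\bar G(0)=1$, so setting $u=v$ in (\ref{MO copula standard eq}) annihilates the first indicator and leaves
\begin{equation*}
\bar C^G(u,u)=e^{-\lambda\bar G^{-1}(u)}\bar G(0)=e^{-\lambda\bar G^{-1}(u)}.
\end{equation*}
Because $\bar G$ is continuous and strictly decreasing with $\bar G(0)=1$ and $\bar G(\infty)=0$, the map $u\mapsto\bar G^{-1}(u)$ is a decreasing bijection from $(0,1]$ onto $[0,\infty)$; I would substitute $x=\bar G^{-1}(u)$, i.e.\ $u=\bar G(x)$, which sends $u\to 0^+$ to $x\to+\infty$ and $u\to 1^-$ to $x\to 0^+$.

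For the lower coefficient this substitution gives
\begin{equation*}
\lambda_L=\lim_{u\to 0^+}\frac{\bar C^G(u,u)}{u}=\lim_{x\to+\infty}\frac{e^{-\lambda x}}{\bar G(x)},
\end{equation*}
and the heavy-tail hypothesis $\lim_{x\to\infty}\bar G(x)/e^{-\lambda x}=+\infty$ is exactly the statement that this limit is $0$. (The same formula shows that a light-tailed $\bar G$ would in general yield a strictly positive $\lambda_L$, so the heavy-tail assumption is the natural one here.)

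For the upper coefficient the substitution yields the $0/0$ form
\begin{equation*}
\lambda_U=\lim_{u\to 1^-}\frac{1-2u+\bar C^G(u,u)}{1-u}=\lim_{x\to 0^+}\frac{1-2\bar G(x)+e^{-\lambda x}}{1-\bar G(x)}.
\end{equation*}
I would finish by L'Hôpital (equivalently, by the first-order expansions $\bar G(x)=1-g(0)x+o(x)$ and $e^{-\lambda x}=1-\lambda x+o(x)$): the numerator differentiates to $2g(x)-\lambda e^{-\lambda x}$ and the denominator to $g(x)$, so the ratio tends to $(2g(0)-\lambda)/g(0)=2-\lambda/g(0)$. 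This uses that $g=-\bar G'$ is continuous at $0$ with $g(0)\in(0,\infty)$, the positivity being forced by $0<\lambda\le 2g(0)$; and the admissibility constraint $g(0)\le\lambda$ (needed for $\bar C^G$ to be a genuine copula, via non-negativity of $P(X>Y)$) places $2-\lambda/g(0)$ in $[0,1]$, as it must be.

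The argument is elementary once the diagonal of $\bar C^G$ has been computed; the only points needing care are the legitimacy of the change of variables $x=\bar G^{-1}(u)$ (continuity and strict monotonicity of $\bar G$) and the mild regularity at the endpoint $x=0$ required to run L'Hôpital. There is no real obstacle beyond recognizing that the heavy-tail condition is precisely what makes $\lambda_L$ vanish.
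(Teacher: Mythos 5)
Your proposal is correct and follows essentially the same route as the paper: evaluate $\bar C^G$ on the diagonal to get $e^{-\lambda \bar G^{-1}(u)}$, substitute $x=\bar G^{-1}(u)$, use the heavy-tail hypothesis for $\lambda_L=0$, and resolve the upper-tail limit via the first-order behaviour of $e^{-\lambda x}$ and $G(x)\sim g(0)x$ at $0$. Your L'Hôpital/Taylor finish is just a repackaging of the paper's final step $2-\lambda\lim_{x\to 0^+}x/G(x)$, so there is no substantive difference.
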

\begin{proof}
    For \textit{1.}, we have 
    \begin{equation*}
        \lambda_L = \lim_{u \rightarrow 0^+} \frac{\bar{C^G}(u,u)}{u} = 
        \lim_{u \rightarrow 0^{+}} \frac{e^{-\theta \Bar{G}^{-1}(u)}}{u};
    \end{equation*}
    setting $x = \bar{G}^{-1}(u)$, we get
    \begin{equation*}
        \lambda_L = \lim_{x \rightarrow \infty} \frac{e^{-\theta x}}{\Bar{G}(x)} = 0.
    \end{equation*}
Regarding \textit{2.}, we can write
\begin{equation*}
         \lambda_U  = \lim_{u \rightarrow 1^{-}} \frac{1-2u+\Bar{C^G}(u,u)}{1-u} = 2+  \lim_{u \rightarrow 1^{-}} \frac{e^{-\theta \Bar{G}^{-1}(u)}-1}{1-u} 
         = 2+\lim_{x \rightarrow 0^{+}} \frac{e^{-\theta x}-1}{1-\Bar{G}(x)} = 2-\theta \lim_{x \rightarrow 0^{+}} \frac{x}{G(x)},
\end{equation*}
from which the conclusion follows.
\end{proof}
In the case in which the marginal distribution is light-tailed, the value of the lower tail dependence coefficient depends on the functional form of the distribution. \\
\vskip 0.3 cm

Using Propositions \ref{lambda_L_prop}, \ref{lambda_U_prop} and \ref{heavy_prop}, we are able to find the lower and the upper tail dependence coefficients for different choices of the common marginal survival functions and of the generator $h$, as shown in the following Examples.
\begin{example}
    Let $\Bar{F}_i(x) = \exp_h(\mu x)$, $i = 1,2$, $\mu \leq \lambda \leq 2 \mu$. \\
    If $h = id$, we get the standard Marshall Olkin distribution: 
    it is well-known that the lower tail dependence coefficient of this distribution is equal to $0$ if $\mu < \lambda \leq 2 \mu$ and to $1$ if $\lambda = \mu$.
    Similarly, using Proposition \ref{lambda_U_prop}, we recover the well-known upper tail dependence coefficient, that is equal to $2-\frac{\lambda}{\mu}$.\\
    If $h(x) = 1-\left(\frac{\tan(\theta(1-x))}{\tan(\theta)}\right)^{\beta}, -\frac{\pi}{2} < \theta < 0, \ 0 < \beta < 1$, then $h$ is a convex bjection of the unit interval and Proposition \ref{lambda_L_prop}
    is satisfied with $\alpha = 1$ and with $b = \frac{2 \theta \beta}{\sin(2 \theta)}$, implying that $\lambda_L(\Bar{C}^F) = \lambda_L(\Bar{C}^G)$.
    Moreover, Proposition \ref{lambda_U_prop} holds true with parameters $\alpha = \beta$ and $b = \left(\frac{\theta}{\tan(\theta)}\right)^{\beta}$, implying that $\lambda_U(\bar{C}_F) = 2-\left(\frac{\lambda}{\mu}\right)^{\beta} \in [0,1]$.    
\end{example}
\begin{example}
    Let $\Bar{F}_i(x) = h((1+x)^{-\gamma})$, $i = 1,2$, $\gamma > 0$, $\gamma +1 \leq \lambda \leq 2 \gamma$. \\
    If $h = id$, by Proposition \ref{heavy_prop},  $\lambda_L(\Bar{C}^G)  = 0$  and $\lambda_U(\Bar{C}^G) = 2-\frac{\lambda}{\gamma} \in [0,1]$. \newline
    If $h(x) = 1-\left(\frac{e^{\theta (1-x)}-1}{e^\theta-1}\right)^{\beta}, \ \theta < 0, \ 0 < \beta < 1$, then $h$ is a convex bjection of the unit interval $[0,1]$ and  condition given in Proposition \ref{lambda_L_prop} is satisfied with $\alpha = 1$ and with $b = \frac{\theta \beta e^\theta}{e^\theta-1}$, implying that $\lambda_L(\Bar{C}^F) = \lambda_L(\Bar{C}^G) = 0$.
    Moreover, by Proposition (\ref{lambda_U_prop}), setting $\alpha = \beta$ and $b = \frac{ \theta}{(e^\theta-1)^\beta}$, we get $\lambda_U(\bar{C}^F) =  2-\left(\frac{\lambda}{\gamma}\right)^\beta \in [0,1]$.\\
     Using conditional distribution method, we simulate data with parameters $\alpha = 3$, $\lambda = 4.5$ and $\theta = -0.01$ from the survival distribution function
     \begin{equation}
         \Bar{F}_{X,Y}(x,y) 
          = 1-\left(\frac{e^{\theta (1-(1+x-y)^{-\gamma})}-1}{e^{\theta}-1}\right)^\beta 1_{x \geq y} + 1-\left(\frac{e^{\theta (1-(1+y-x)^{-\gamma})}-1}{e^{\theta}-1}\right)^\beta 1_{x < y},
         \label{scatter survival}
     \end{equation} 
     with Kendall distribution function given by 
     \begin{equation*}
     \begin{split}
& K(t) = t -\dfrac{{\beta}{\theta}\cdot\left(1-\frac{\ln\left(\left(\mathrm{e}^{\theta}-1\right)\left(1-t\right)^\frac{1}{{\beta}}+1\right)}{{\theta}}\right) \left(\left(\mathrm{e}^{\theta}-1\right)\left(1-t\right)^\frac{1}{{\beta}}+1\right)\left(1-t\right)^\frac{{\beta}-1}{{\beta}}}{\mathrm{e}^{\theta}-1} \cdot \\
& \cdot \left(\frac{2\gamma^2}{\lambda}\left(1-\left(1-\frac{\ln\left(\left(\mathrm{e}^{\theta}-1\right)\left(1-t\right)^\frac{1}{{\beta}}+1\right)}{{\theta}}\right)^\frac{1}{{\gamma}}\right)+2\ln\left(1-\frac{\ln\left(\left(\mathrm{e}^{\theta}-1\right)\left(1-t\right)^\frac{1}{{\beta}}+1\right)}{{\theta}}\right)\right),
\end{split}
     \end{equation*}
     see (\ref{K_Pareto}).
     The scatterplots and the Kendall distribution function are given in Figures \ref{Figure6}, \ref{Figure7} and \ref{Figure8} for three different values of $\beta$, with $\gamma = 3$, $\lambda = 4.5$ and $\theta = -0.01.$

Notice that dependence decreases as $\beta$ increases: standard lack-of-memory property is obtained when $\beta = 1$ and $\theta \rightarrow 0^{-}$.
\end{example}
\section{Conclusions}
In this paper, we have generalized lack-of-memory properties by substituting into (\ref{intr5}) and (\ref{intr4}) the standard product by the pseudo product $\otimes$: we have discussed the conditions under which the solutions of the obtained functional equations (\ref{intr_pseudo_strong}) and (\ref{intr_pseudo_weak}) are bivariate survival functions.
Then we have focused our attention on the solution of (\ref{intr_pseudo_weak}): we have proved that it may have a singularity along the line $x=y$ and we have analysed how the probability mass is distributed along that line.
Moreover, we have studied some properties of this distribution and we have characterized it in the spirit of Block and Basu (1974).
Finally, we have analysed the dependence structure of the distribution satisfying (\ref{intr_pseudo_weak}) showing that its associated survival copula can be written in terms of the survival copula of the underlying distribution which satisfies the standard weak lack-of-memory property: in particular, we have obtained an expression for the Kendall distribution function in full generality and we have determined analytically the lower and the upper tail dependence coefficients for specific choices of the generator and of the distorted marginal survival functions. \\

\section{Figures}
\begin{figure}[h!]
     \centering
      \begin{tabular}{@{}c@{}}
         \centering
         \includegraphics[width= 5 cm]{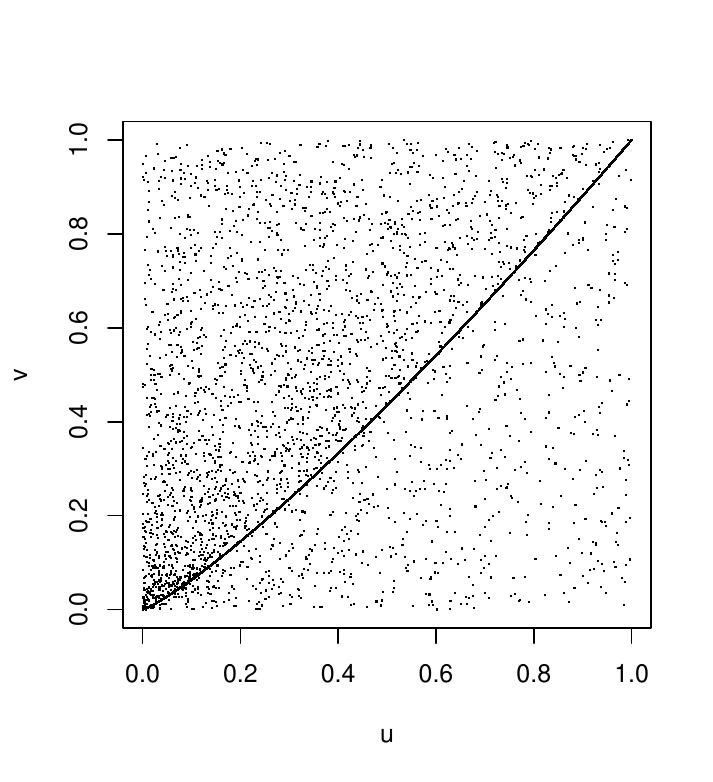}
     \end{tabular}
     \hfill
      \begin{tabular}{@{}c@{}}
         \centering
         \includegraphics[width= 5 cm]{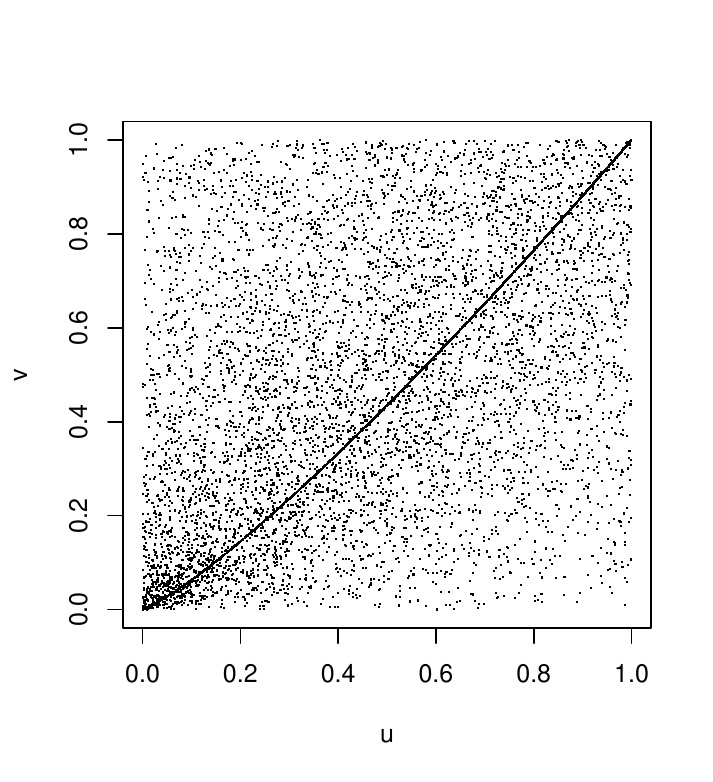}
     \end{tabular}
     \hfill
      \begin{tabular}{@{}c@{}}
         \centering
         \includegraphics[width= 5.5 cm]{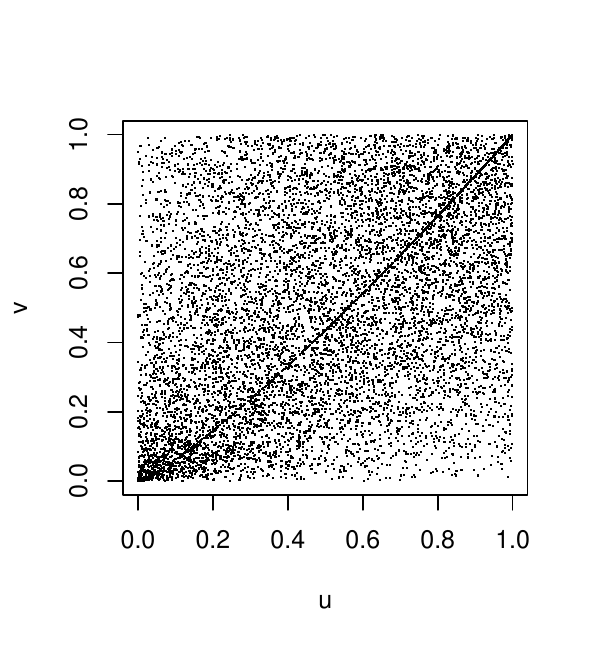}
     \end{tabular}
        \caption{
        Scatterplots from (\ref{survival example 2}).
        Left: $\theta = 0.01$. Center: $\theta = 0.5$. Right: $\theta = 0.99$.}
        \label{fig:three graphs}
\end{figure}
          \begin{figure}[h!]
     \centering
      \begin{tabular}{@{}c@{}}
         \centering
         \includegraphics[width= 5.5 cm]{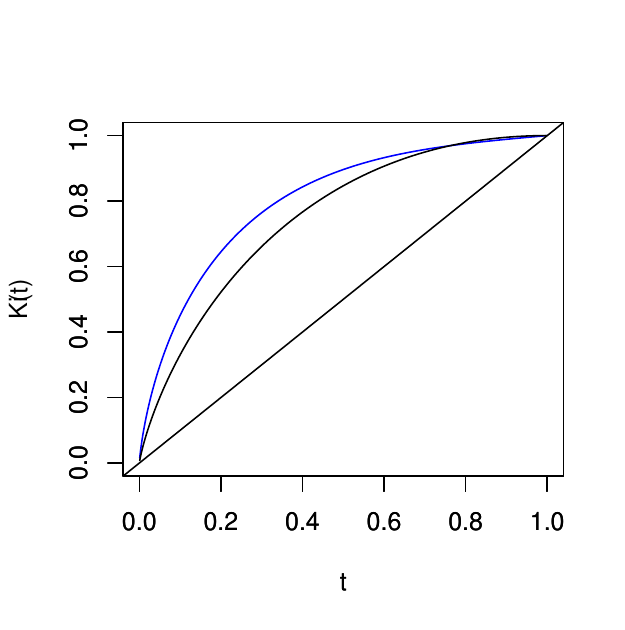}
     \end{tabular}
     \hfill
      \begin{tabular}{@{}c@{}}
         \centering
         \includegraphics[width= 5.5 cm]{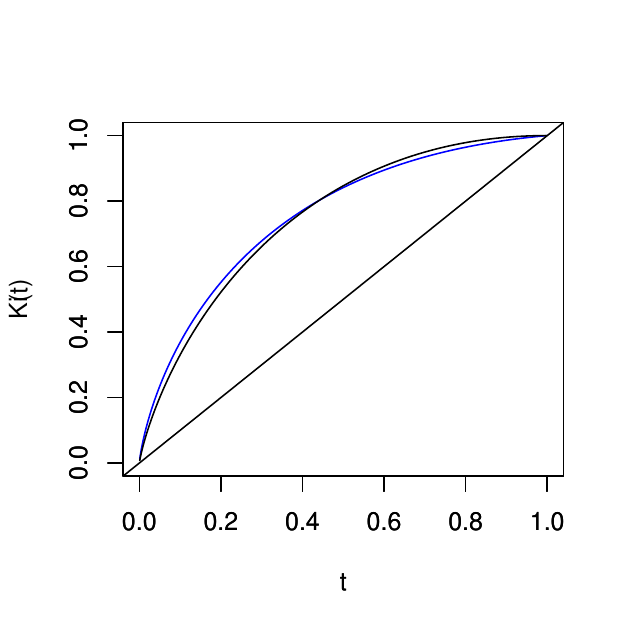}
     \end{tabular}
     \hfill
      \begin{tabular}{@{}c@{}}
         \centering
         \includegraphics[width= 5.5 cm]{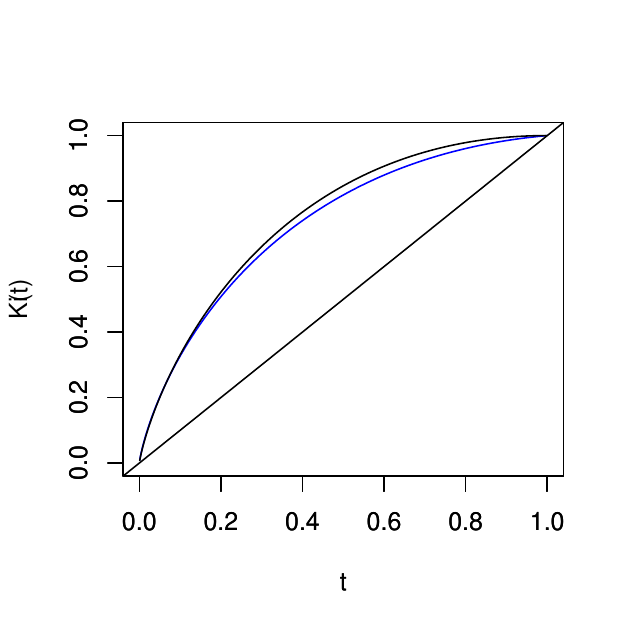}
     \end{tabular}
        \caption{
        Blue Curve: Kendall distribution function of (\ref{F1}).
        Black Curve: Independence Curve.
         Left: $\gamma = 1$; Center: $\gamma = 3$; Right: $\gamma = 5$.}
        \label{Figure13}
\end{figure}
          \begin{figure}[h!]
     \centering
      \begin{tabular}{@{}c@{}}
         \centering
         \includegraphics[width= 5.5 cm]{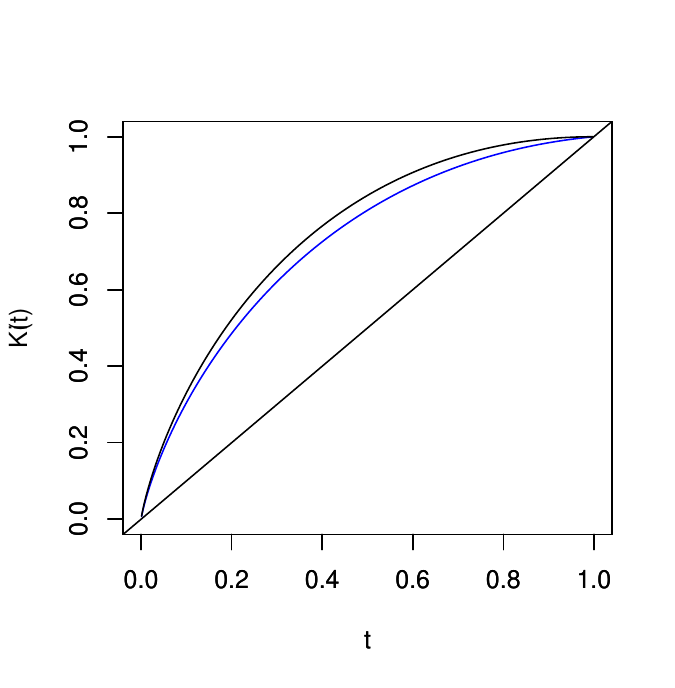}
     \end{tabular}
     \hfill
      \begin{tabular}{@{}c@{}}
         \centering
         \includegraphics[width= 5.5 cm]{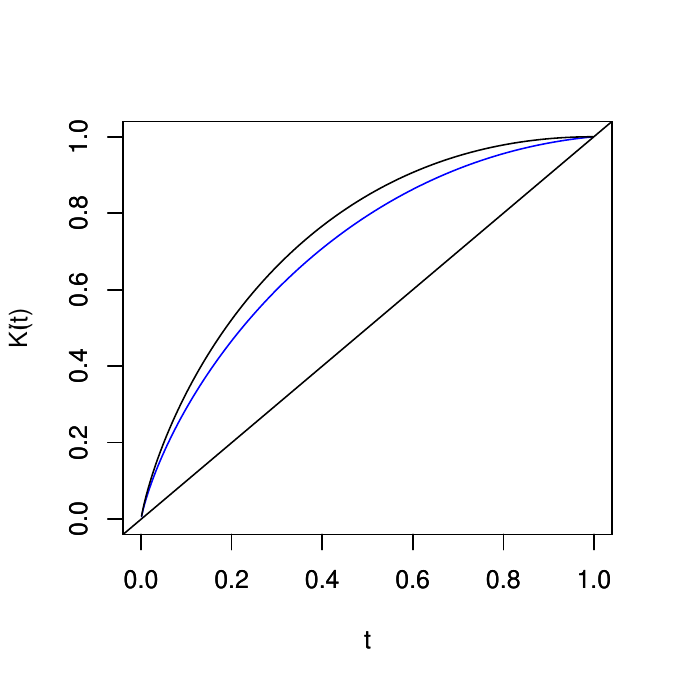}
     \end{tabular}
     \hfill
      \begin{tabular}{@{}c@{}}
         \centering
         \includegraphics[width= 5.5 cm]{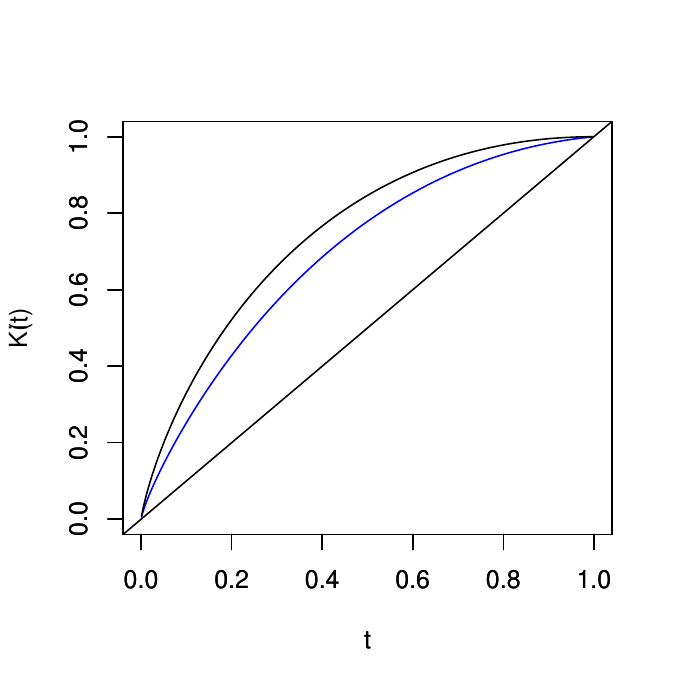}
     \end{tabular}
        \caption{
        Blue Curve: Kendall distribution function of (\ref{F13}).
        Black Curve: Independence Curve.
         Left: $\gamma = 0.01$; Center: $\gamma = 0.5$; Right: $\gamma = 2.5$.}
        \label{Figure15}
\end{figure}
\begin{figure}[h!]
     \centering
      \begin{tabular}{@{}c@{}}
         \centering
         \includegraphics[width= 5.5 cm]{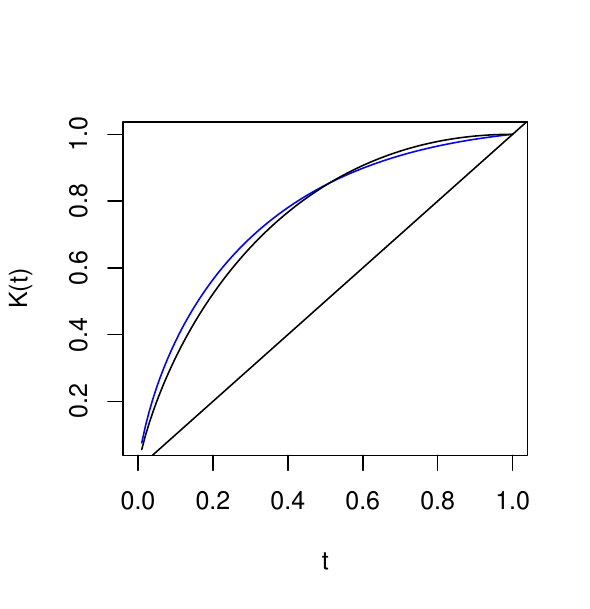}
     \end{tabular}
     \hfill
      \begin{tabular}{@{}c@{}}
         \centering
         \includegraphics[width= 5.5 cm]{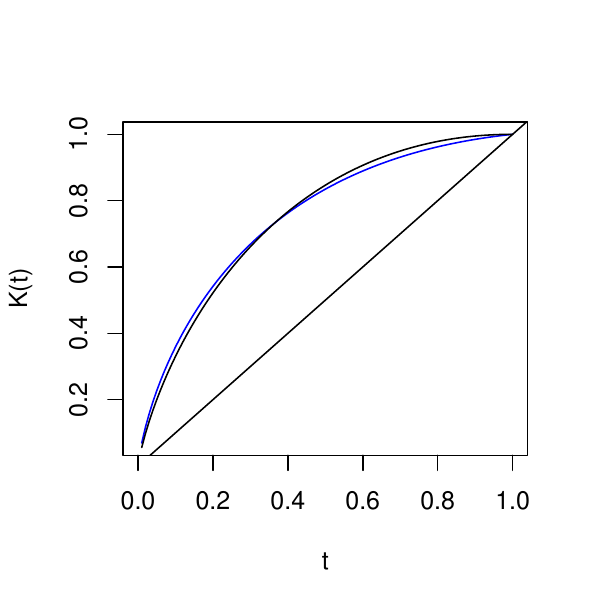}
     \end{tabular}
     \hfill
      \begin{tabular}{@{}c@{}}
         \centering
         \includegraphics[width= 5.5 cm]{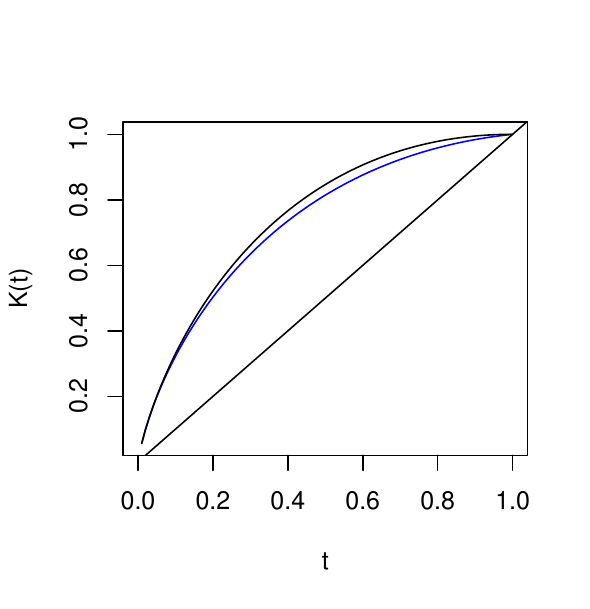}
     \end{tabular}
        \caption{
        Blue Curve: Kendall distribution Function of (\ref{F2}).
        Black Curve: independence curve.
         Left: $\gamma = 2$. Center: $\gamma = 3$. Right: $\gamma = 10$.}
        \label{Figure 52b}
\end{figure} 
\begin{figure}[h!]
     \centering
         \includegraphics[width= 4.5 cm]{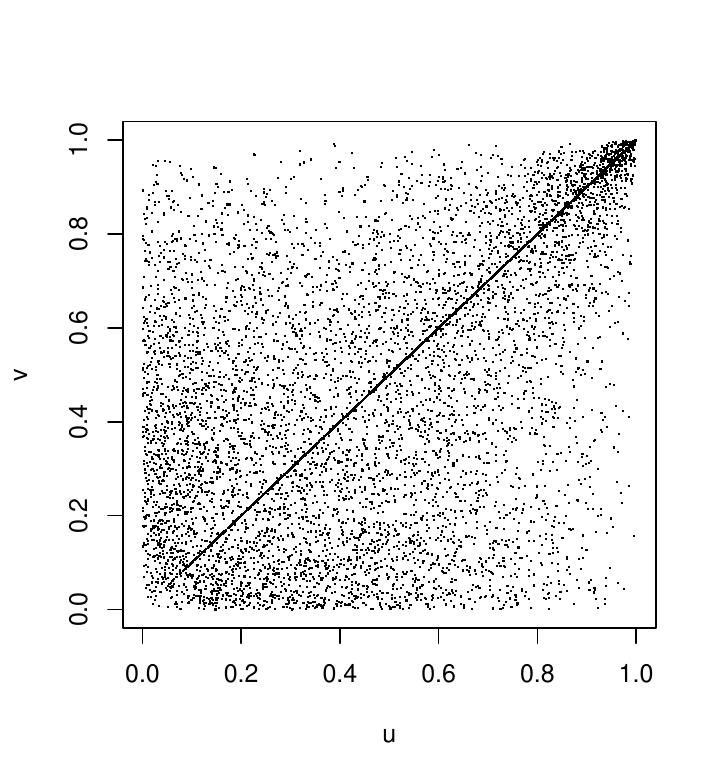} \hskip 2 cm
         \centering
         \includegraphics[width= 4.5 cm]{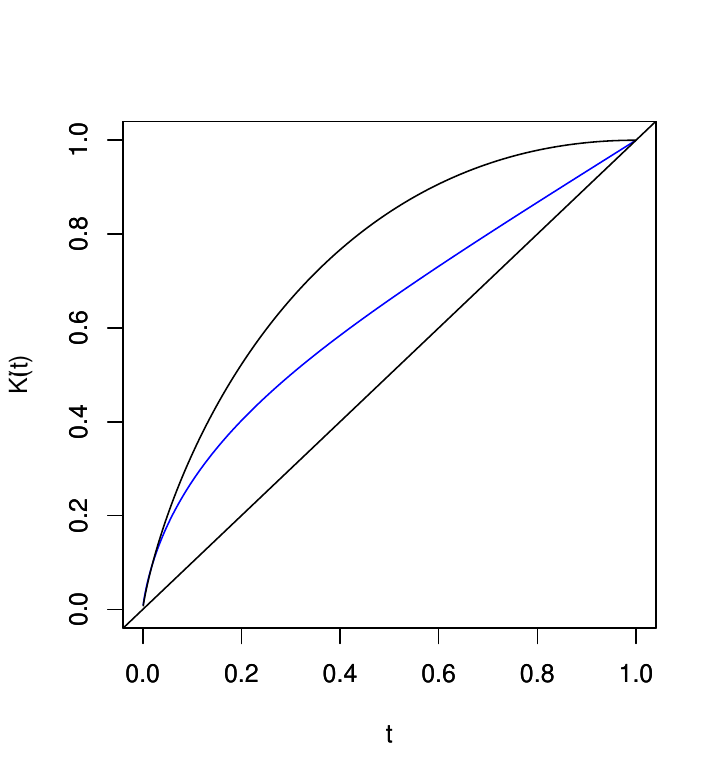}
        \caption{
        Left. Scatterplot from  (\ref{scatter survival}) with $\beta = 0.5$.
        Right. Blue Curve: Kendall distribution function of (\ref{scatter survival}) with $\beta = 0.5$. Black Curve: Independence Curve.
        The Kendall tau is equal to $0.483$, while the upper tail dependence coefficient is equal to $2-\sqrt{\frac{3}{2}}$}
        \label{Figure6}
\end{figure}
\begin{figure}[h!]
     \centering
         \includegraphics[width= 4.5 cm]{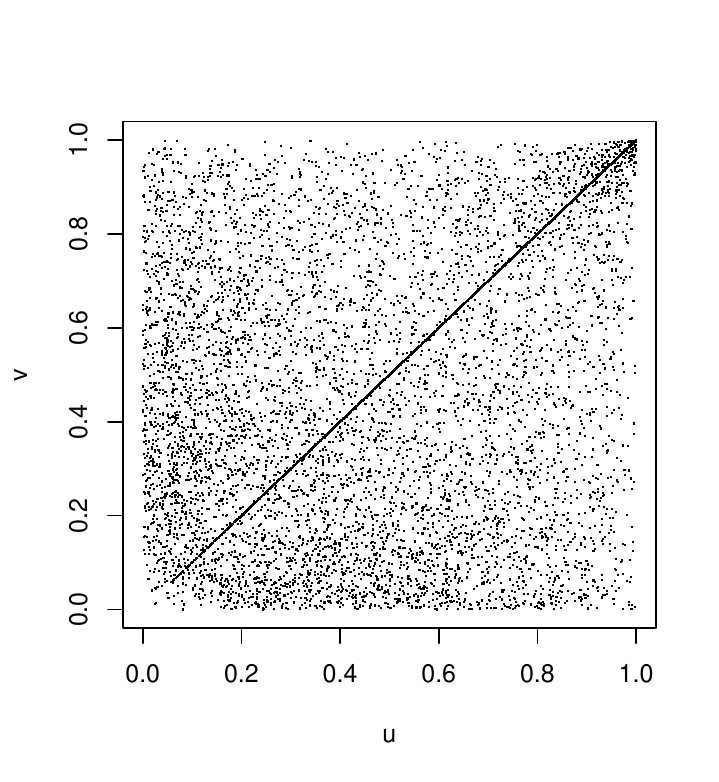} \hskip 2 cm
         \includegraphics[width= 4.5 cm]{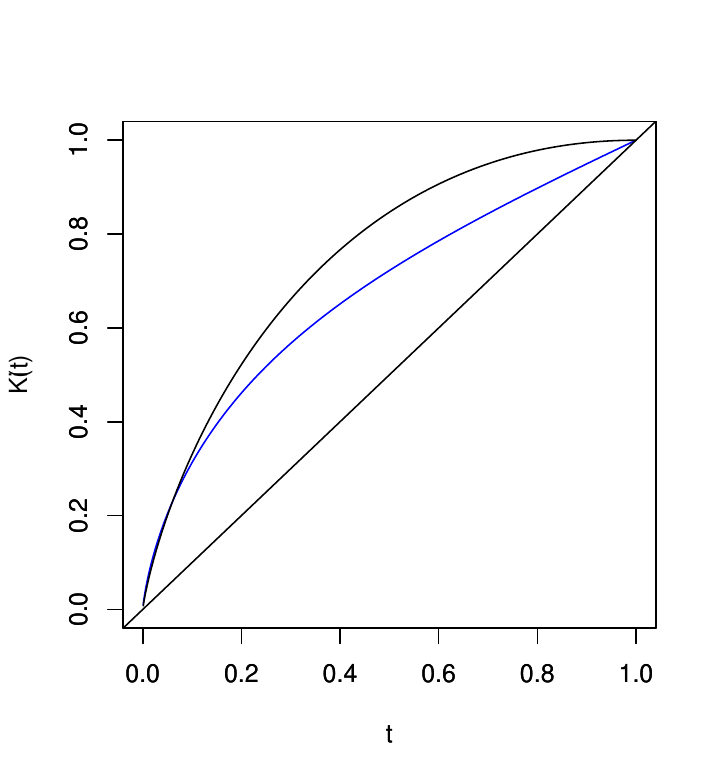} \hskip 2 cm
        \caption{
        Left. Scatterplot from  (\ref{scatter survival}) with $\beta = 0.75$.
        Right. Blue Curve: Kendall distribution function of (\ref{scatter survival}) with $\beta = 0.75$. Black Curve: Independence Curve.
        Kendall tau is equal to $0.305$, while the upper tail dependence coefficient is equal to $2-\frac{3}{2}^{\frac{3}{4}}$.}
        \label{Figure7}
\end{figure}
\begin{figure}[h!]
     \centering
         \includegraphics[width= 4.5 cm]{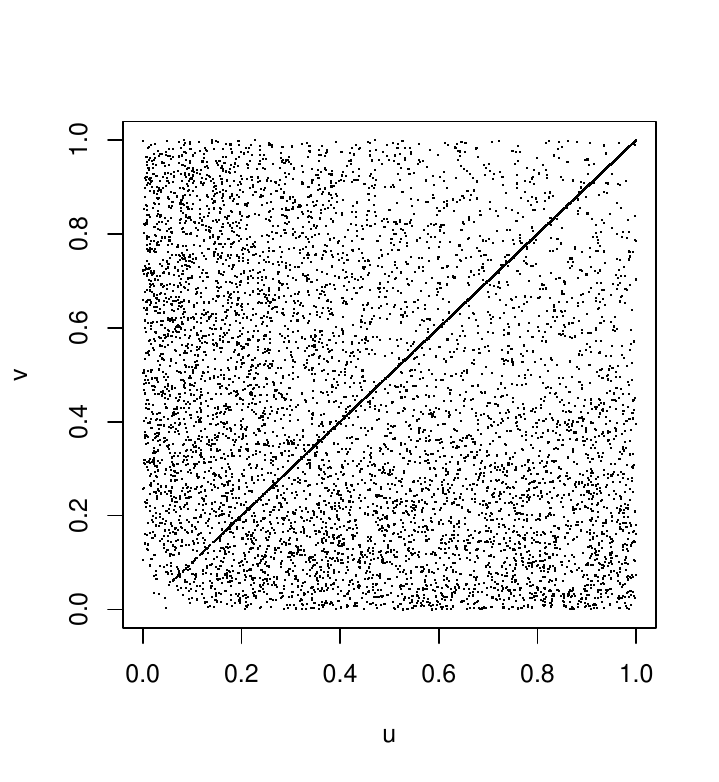} \hskip 2 cm
         \includegraphics[width= 4.5 cm]{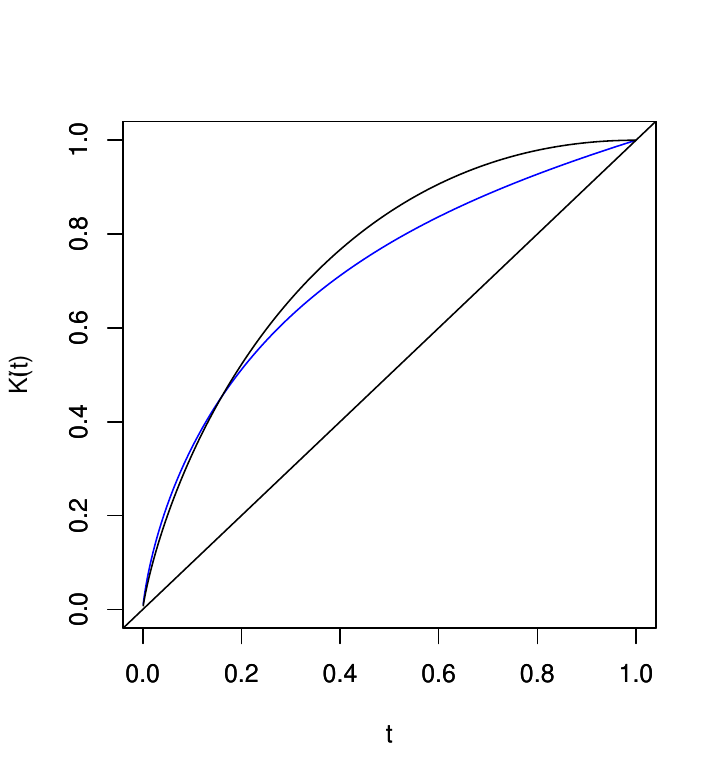}
        \caption{
        Left. Scatterplot from  (\ref{scatter survival}) with $\beta = 1$.
        Right. Blue Curve: Kendall distribution function of (\ref{scatter survival}) with $\beta = 1$. Black Curve: Independence Curve.
        Kendall tau is equal to $0.164$, while the upper tail dependence coefficient is equal to $\frac{1}{2}$.}
        \label{Figure8}
\end{figure}

\begin{thebibliography}{99}
\bibitem{AFV} A.V. Asimit et al. (2010). On a multivariate Pareto distribution. Insurance: Mathematics and Economics, 46, 308–316.
\bibitem{BB} H.W. Block and A.P. Basu (1974). A continuous bivariate exponential extension. Journal of the American Statistical Association, 69, 1031-1037.
\bibitem{Ca} Y. Chi et al.(2009). Decomposition of a Schur-constant model and its applications. Insurance: Mathematics and Economics, 44.3, 398-408.
\bibitem{ADM} A.De Morgan (1839). On the rule for finding the value of an annuity on three lives. The London, Edinburg and Dublin Philosophical Magazine and Journal of Science, 15, 337–339.
\bibitem{DFS} F. Durante et al (2010). Distorted Copulas: Constructions and Tail Dependence. Communications in Statistics - Theory and Methods, 39.12, 2288-2301.
\bibitem{GK} C.Genest and N.Kolev (2021). A law of uniform seniority for dependent lives. 
Scandinavian Actuarial Journal, 2021.8, 726-743.
\bibitem{GR2} C. Genest and L.P.Rivest (2001). On the multivariate probability integral transformation. Statistics and probability letters, 53.4, 391-399.
\bibitem{GM} S.G. Ghurye and A.W. Marshall (1984). Shock Processes with Aftereffects and Multivariate Lack-of-Memory Property. Journal of Applied Probability, 21, 786-801.
\bibitem{Jo}
Joe, H. (2014). Dependence modeling with copulas. CRC press.
\bibitem{KMP2} E.P. Klement et al. (2004). Triangular norms. Position paper III: continuous t-norms. Fuzzy Sets and Systems, 145, 439–454.
\bibitem{KMP} E.P. Klement et al. (2005). Transformations of copulas. Kybernetika, 41, 425–434.
\bibitem{LP} X.Li and F.Pellerey (2011). Generalized Marshall-Olkin distributions and related bivariate aging properties. Journal of Multivariate Analysis, 102, 1399-1409.
\bibitem{K} H.V. Kulkarni (2006). Characterizations and modelling of multivariate lack of memory property. Metrika, 64.2, 167-180.
\bibitem{L} C.H. Ling (1965). Representation of associative functions. Publicationes Mathematicae Debrecen, 12, 189-212. 
\bibitem{Lu} J.C. Lu (1989). Weibull extension of the Freund and Marshall–Olkin bivariate exponential model. IEEE Transactions on Reliability, 38, 615–619.
\bibitem{MO} A.Marshall and I.Olkin (1967). A multivariate exponential distribution. Journal of the American
Statistical Association, 62, 30–44.
\bibitem{MN} A.J. McNeil and J.Neslehovà (2009). Multivariate Archimedean copulas, d-monotone functions and l1-norm
symmetric distributions. The Annals of Statistics, 37, 3059–3097.
\bibitem{MS} P.Muliere and M.Scarsini (1987). Characterization of a Marshall-Olkin Type Class of Distributions. Annals of the Institute of Statistical Mathematics, 39, 429-441.
\bibitem{SM} S.Mulinacci (2018). Archimedean-based Marshall-Olkin Distributions and Related Dependence Structures. Methodology and Computing in Applied
Probability, 20, 205–236.
\bibitem{MR} S.Mulinacci and M. Ricci (2024). Pseudo-moment generating functions: Application to pseudo-Schur constant random vectors. Fuzzy Sets and Systems, 485, 108963.
\bibitem{N} Nelsen, R. B. (2006). An introduction to copulas. Springer.
\bibitem{RGR} R.G.Ricci (2024). Distorted Copulas. Fuzzy Sets and Systems, 484, 108947.
\bibitem{SS} B. Schweizer and A. Sklar (1961). Associative functions and statistical triangle inequalities. Publicationes
Mathematicae Debrecen, 8, 169-186. 
\end{thebibliography}
\end{document}